\newtheorem{theorem}{Theorem}[section]
\newtheorem{lemma}[theorem]{Lemma}
\newtheorem{corollary}[theorem]{Corollary}
\newtheorem{remark}[theorem]{Remark}
\newtheorem*{theorem*}{Theorem}{\bf}{\it}
\newtheorem*{proposition*}{Proposition}{\bf}{\it}
\newtheorem*{observation*}{Observation}{\bf}{\it}
\newtheorem*{lemma*}{Lemma}{\bf}{\it}
\theoremstyle{definition}
\newtheorem{definition}[theorem]{Definition}
\newtheorem{proposition}{Proposition}
\begin{document}

\title{On the Boundary Behavior of  Positive Solutions of Elliptic Differential Equations
}

\author{Alexander Logunov
}


\maketitle

\begin{abstract}

Let $u$ be a positive harmonic function in the unit ball $B_1 \subset \mathbb{R}^n$ and let $\mu$ be the boundary measure of $u$. Consider a point $x\in \partial B_1$ and let $n(x)$ denote the unit normal vector at $x$. Let $\alpha$ be a number in $(-1,n-1]$ and $A \in [0,+\infty) $. We prove that $u(x+n(x)t)t^{\alpha} \to A$ as $t \to +0$ if and only if $\frac{\mu({B_r(x)})}{r^{n-1}} r^{\alpha} \to C_\alpha A$ as $r\to+0$, where ${C_\alpha= \frac{\pi^{n/2}}{\Gamma(\frac{n-\alpha+1}{2})\Gamma(\frac{\alpha+1}{2})}}$. For $\alpha=0$  it follows from the theorems by Rudin and Loomis which claim 
 that a positive harmonic function has a limit along the normal iff the boundary measure has the derivative at the corresponding point of the boundary. For $\alpha=n-1$ it concerns about the point mass of $\mu$ at $x$ and it follows from the Beurling minimal principle.  For the general case of $\alpha \in (-1,n-1)$ we prove it with the help of the Wiener Tauberian theorem in a similar way to Rudin's approach. Unfortunately this approach works for a ball or a half-space only but not for a general kind of domain.  In dimension $2$ one can use conformal mappings and generalise the statement above to sufficiently smooth domains, in  dimension $n\geq 3$ we showed that this generalisation  is possible for $\alpha\in [0,n-1]$ due to harmonic measure estimates.  The last method leads to an extension of the theorems by Loomis, Ramey and Ullrich on non-tangential limits of harmonic functions to positive solutions of elliptic differential equations with Holder continuous coefficients.

\end{abstract}
\section{Introduction}
\label{introduction}
Let $K(x,t):=\frac{ct}{(|x|^2+t^2)^{n/2}}$ be the Poisson kernel in the upper halfspace $\mathbb{R}^n_+:=\{(x,t): x \in \mathbb{R}^{n-1}, t>0 \}$, $c$ is equal to $\frac{\Gamma(n/2)}{\pi^{n/2}}$.
 For any positive and harmonic in $\mathbb{R}^n_+$ function $u$ there is the unique representation (see \cite{WR})
\begin{equation} \label{umu}  u(x,t)= Ct+ \int\limits_{\mathbb{R}^{n-1}}K(x-\xi,t)d\mu (\xi) 
\end{equation}  
for some constant $C\geq0$ and Borel measure $\mu$ (non-negative) on $\mathbb{R}^{n-1}$ such that 
\begin{equation}
 \label{mu}  \int\limits_{\mathbb{R}^{n-1}}\frac{1}{(|x|^2+1)^{n/2}}d\mu (x) < +\infty.
\end{equation} 
The measure $\mu$ is called the boundary measure of $u$. If $u$ is continuous up to the boundary of $\mathbb{R}^n_+$, then $\mu=u|_{_{\partial \mathbb{R}^n_+}} \cdot dS $, where $dS$ - is the Lebesgue measure on $\partial \mathbb{R}^n_+$. 
 If $C$ in the representation \eqref{umu} is equal to 0, we will say that $u=u_\mu$. 
We introduce the notion of the order of  growth.
\begin{definition} \label{order}
  We will say that $u$ has the order of growth $\alpha$ at the point $x \in  \partial \mathbb{R}^n_+$ if $\alpha=\inf \{\kappa: u(x+t\bar n(x))  t^{\kappa}{\to 0} \hbox{ as $t$ goes to $0$} \}$.
\end{definition}
\begin{remark}
  The order of growth of positive harmonic function can not be less than $-1$. If $\frac{u(x+t\bar n(x))}{t} \to 0$ as $t\to +0$, then $u\equiv 0$.
\end{remark}

\begin{remark}
 The order of growth of positive harmonic function can not be greater than $n-1$.
\end{remark}
  Two remarks above easily follow from the representation \eqref{umu} and the inequality \eqref{mu}.

 The goal of this paper is to bind the boundary behavior of $u$ (like growth along the normal) and the properties of the boundary measure $\mu$.  
 The next theorem provides the criterion of the polynomial growth along the normal. 
\begin{theorem} \label{criterium}
 Let $\alpha \in (-1,n-1)$. The following statements are equivalent:
\begin{enumerate}[(i)]
\item  The limit $ \lim\limits_{t\to+0} u(0,t)t^{\alpha}$  exists and equals $a$ for some constant $a\in [0,+\infty)$.
\item  There exists $b\in[0,+\infty)$ such that $\frac{\mu({B(r)})}{r^{n-1}} r^{\alpha} \to b$ as $r \to +0$.
\end{enumerate}
 If any of the limits above exists, then the limit values are related by
$$a=b\frac{\Gamma(\frac{n-\alpha+1}{2})\Gamma(\frac{\alpha+1}{2})}{\pi^{n/2}}$$  
\end{theorem}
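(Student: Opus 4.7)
The strategy is to encode both hypotheses as statements about a single Mellin-type convolution and then apply the Wiener Tauberian theorem in the manner of Rudin. Let $F(r):=\mu(B(r))$. Integration by parts in the Poisson integral
\[
u(0,t)=ct\int_0^\infty \frac{dF(r)}{(r^2+t^2)^{n/2}}
\]
gives
\[
u(0,t)=cnt\int_0^\infty \frac{rF(r)\,dr}{(r^2+t^2)^{n/2+1}},
\]
the boundary terms vanishing because \eqref{mu} forces $F(r)=o(r^n)$ at infinity and because (i) excludes a point mass of $\mu$ at $0$ when $\alpha<n-1$. Substituting $r=ts$, $s=e^{v}$, $t=e^{u}$ and setting $\Phi(y):=F(e^{y})e^{-(n-1-\alpha)y}$, this identity becomes an additive convolution on $\R$,
\[
t^{\alpha}u(0,t)=cn\int_{\R}K(v)\,\Phi(u+v)\,dv,\qquad K(v):=\frac{e^{(n+1-\alpha)v}}{(e^{2v}+1)^{n/2+1}},
\]
with $K\in L^{1}(\R)$ precisely because $\alpha\in(-1,n-1)$. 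Hypotheses (i) and (ii) then read, respectively, as $(K*\Phi)(u)\to a/(cn)$ and $\Phi(y)\to b$, both as $u,y\to -\infty$.

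\textbf{Easy direction (ii)$\Rightarrow$(i).} Using \eqref{mu} to control $\Phi$ at $+\infty$ and the assumption for its control at $-\infty$, dominated convergence gives $t^{\alpha}u(0,t)\to b\cdot cn\int_{\R}K$. A direct Beta-integral evaluation,
\[
\int_{\R}K(v)\,dv=\tfrac{1}{2}\,\frac{\Gamma((n-\alpha+1)/2)\,\Gamma((\alpha+1)/2)}{\Gamma(n/2+1)},
\]
together with $cn=2\Gamma(n/2+1)/\pi^{n/2}$, produces exactly the claimed constant $a=b\,\Gamma((n-\alpha+1)/2)\Gamma((\alpha+1)/2)/\pi^{n/2}$.

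\textbf{Hard direction (i)$\Rightarrow$(ii), the Tauberian step.} I would compute the Fourier transform of $K$ explicitly via $x=e^{2v}$ and the Beta integral:
\[
\widehat K(\xi)=\frac{\Gamma((n-\alpha+1-i\xi)/2)\,\Gamma((\alpha+1+i\xi)/2)}{2\,\Gamma(n/2+1)}.
\]
Since $\alpha\in(-1,n-1)$, the real parts of both Gamma arguments are strictly positive, and the Gamma function has no zeros, so $\widehat K$ never vanishes on $\R$ --- exactly the hypothesis needed for the Wiener Tauberian theorem. Wiener then propagates the asymptotic from the specific kernel $K$ to every $h\in L^{1}(\R)$:
\[
\int_{\R}h(v)\,\Phi(u+v)\,dv \;\longrightarrow\; \frac{a}{cn}\cdot\frac{\int h}{\int K}\qquad(u\to -\infty).
\]
The pointwise conclusion $\Phi(y)\to b$ is then extracted by choosing $h$ to be a sharp bump approximating an indicator of a short interval and invoking the monotonicity of $F$ as a Hardy--Littlewood--Karamata-type side condition: the function $e^{(n-1-\alpha)y}\Phi(y)=F(e^{y})$ is nondecreasing, so $\Phi$ cannot oscillate drastically on short intervals.

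\textbf{Main obstacle.} The delicate step is the passage from Wiener's convolutional asymptotic to the pointwise limit of $\Phi$. Two prerequisites must be verified with care: (a) boundedness of $\Phi$ on a neighbourhood of $-\infty$ under hypothesis~(i), which has to be extracted from the convolution bound combined with positivity of $\Phi$, and (b) the Karamata--monotonicity upgrade converting the $L^{1}$-convolutional statement into a pointwise one. Both become available precisely in the range $\alpha\in(-1,n-1)$, where all the relevant Gamma arguments stay in the right half-plane and $K\in L^{1}$; this is also the reason the method breaks down at the endpoints $\alpha=-1$ and $\alpha=n-1$, which belong to the separate theorems of Rudin/Loomis and Beurling mentioned in the abstract.
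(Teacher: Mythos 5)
Your plan follows essentially the same route as the paper: integration by parts turns $u(0,t)$ into a Mellin convolution of $\mu(B(r))/r^{n-1}$ against a fixed kernel whose Mellin transform is a ratio of Gamma functions (computed by the Beta integral, hence nonvanishing), the Wiener Tauberian theorem then propagates the asymptotic, and the pointwise limit of $\mu(B(r))r^{\alpha-n+1}$ is finally extracted from the monotonicity of $\mu(B(\cdot))$ by squeezing against indicator approximants---exactly the sequence of steps, including the a priori bound $M(t)t^{\alpha}\in L^{\infty}$ deduced from positivity, that the paper executes. The only difference is notational: you rewrite the multiplicative convolution on $(\mathbb{R}_+,d\ln t)$ as an additive convolution on $\mathbb{R}$ after the substitution $t=e^{u}$, but this is the same framework.
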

 Theorem \ref{criterium} implies the following remark.
\begin{remark}
Let $u$, $\mu$ be as above. The order of growth of $u$ at $O$ equals $\alpha$ $\Longleftrightarrow$ $\alpha= \inf \{\kappa:   \frac{\mu({B(r)})}{r^{n-1}} r^{\kappa}\to 0 \hbox{ as $r$ goes to $0$} \}$. 
\end{remark}

  Theorem \ref{criterium} is true for $\alpha=n-1$ as well, this case follows from the Beurling minimal principle, we will briefly discuss it in section \ref{tBMP}. For $\alpha=-1$ the theorem fails. The case $\alpha=0$ is related to existence of the limit along the normal, this case was treated in \cite{LL} (n=2) and \cite{WR} ($n\geq 3$). In the last paper the main tool was the Wiener Tauberian theorem, we will also use it in the proof of theorem \ref{criterium}. However this approach deeply relies on the special form of our domain (half-space) and there is an obstacle to go to any other kind of domain. In dimension $2$ one can use conformal mappings and easily generalise the theorem above to sufficiently smooth domains, in  dimension $n\geq 3$ we showed that this generalisation  is possible for $\alpha\in [0,n-1]$ due to harmonic measure estimates, we do not treat the case $\alpha \in (-1,0)$. In order to compensate the lack of conformal mappings in higher dimensions we consider elliptic operators of the second order. Sometimes we perform deformations of the coordinates to make a general domain flat near a fixed boundary point, doing this step we lose harmonicity of a considered function and work with positive solutions of elliptic differential equations with the help of the asymptotic estimates of the Green function.

 Given a point $P\in \partial \Omega$ we are also interested in conditions to be imposed on $\mu$ near $P$ to ensure the existence of a finite non-tangential limit of $u$ at $P$. For the case of a half-space and the Laplace operator Loomis  showed for $n=2$, and  Ramey, Ullrich showed for $n\geq 3$ that there is a criterion in terms of smoothness of the boundary measure (see \cite{WRDU} or section \ref{nontangential limit} for the precise statement). The method of \cite{WRDU}  used the geometrical property of a half-space, namely $\mathbb{R}^n_+$ is invariant under the homothety transformation. That method treats smoothness of the boundary measure as a weak convergence of its rescalings to the Lebesgue measure and interprets non-tangential limit of a harmonic function as a normal convergence  of its dilatations to a constant. The elegant  approach in \cite{WRDU} avoids the Wiener Tauberian theorem.  We extend the criterion of existence of the non-tangential limit to solutions of elliptic equations in smooth domains in theorem \ref{NT2}.  It would be interesting to find any non-tangential analogues of the theorem \ref{criterium} in higher dimensions,  we refer the reader to \cite{AK},\cite{GFW2} for this analogue in dimension $n=2$, and to its connection to the Lindelof theorem.
 
We denote by $\Omega$ a domain in $\mathbb{R}^n$ with a sufficiently smooth boundary and by $L$ an elliptic differential operator, $$L:= \sum\limits_{i,j=1}^{n}a_{ij}\frac{\partial^2}{\partial x_i\partial x_j}+ \sum\limits_{i=1}^{n}b_i\frac{\partial}{\partial x_i}+c,$$
 where $a_{ij}$, $b_i$ and $c$ are functions defined in $\overline{\Omega}$. Let $\alpha\in(0,1)$ and $\lambda\geq 1$.
\begin{definition} \label{L+} Denote by $\mathrm{L}^+(\lambda,\alpha,\Omega)$ the class of all operators $L$ enjoying the following properties:
\begin{enumerate}[(a)]
\item $\lambda |\xi|^2 \geq \sum\limits_{i,j=1}^{n} a_{ij}(x)\xi_i\xi_j\geq \lambda^{-1}|\xi|^2$ for any $x\in \overline\Omega$ and $\xi \in \mathbb{R}^n$.
\item $\sum\limits_{i,j=1}^{n}|a_{ij}(x)-a_{ij}(y)|+\sum\limits_{i=1}^{n}|b_i(x)-b_i(y)|+|c(x)-c(y)| \leq \lambda|x-y|^\alpha$ for all $x,y \in \overline\Omega$.
\item $\sum\limits_{i,j=1}^{n}|a_{ij}(x)|+\sum\limits_{i=1}^{n}|b_i(x)|+|c(x)| \leq \lambda$ for any $x \in \overline\Omega$.
\item $c(x)\leq0$ for any $x\in \overline\Omega$.
\end{enumerate}

\end{definition}
\begin{definition}
A function $u\in C^2(\Omega)$ is called $L$-harmonic if $Lu=0$ in $\Omega$.
\end{definition}
Any positive $L$-harmonic function $u$ admits a Poisson-like integral representation
\begin{equation} \label{representation 1} u(x)=\int\limits_{\partial\Omega}\frac{\partial G_L(\xi,x)}{\partial \nu(\xi)}d\mu(\xi), x\in{\Omega}
\end{equation}
 with a Borel measure $\mu=\mu_u$ on $\partial\Omega$ (we assume $\Omega$ is $C^{2,\varepsilon}$-smooth and bounded, $L \in \mathrm{L}^+(\lambda,\alpha,\Omega)$). We will call $\mu$ the boundary measure of $u$. We denote by $G_L$ the Green function of $L$ for the domain $\Omega$ (see \cite{AA}, \cite{HHSM}); $n(x)$ and $\nu(x):=\Big( a_{ij}(x) \Big) n(x)$ are normal and conormal vectors at the boundary point $x\in \partial\Omega$. We will implicitly use certain smoothness of $G_L(x,y)$ up to the boundary of $\Omega$ as a function of $x$  whenever $x\neq y$ (see \cite{HHSM2}, lemma 2.1).
Note that the representation formula (\ref{representation 1}) has a generalisation in terms of the Martin's boundary for more general domains (see \cite{AA}, theorem 6.3), in our particular case the Green function is sufficiently smooth up to the boundary and the Martin's representation implies \eqref{representation 1}.

 Our paper is organized as follows. In section \ref{GT} we prove the theorem \ref{criterium} with the help of  the Wiener Tauberian theorem, in section \ref{G-estimates} we reproduce some known information on  the $L$-harmonic measure (i.e. $\frac{\partial G_L(x,\xi)}{\partial\nu(\xi)}dS(\xi)$, $dS$ being the surface Lebesgue measure on $\partial \Omega$) and recall some pointwise estimates of the Green function $G_L$, section \ref{asymptotic formula} is devoted to asymptotic estimates of  $L$-harmonic measure.  This information is used to compensate the lack of conformal mappings in higher dimensions and to reduce our problems for positive $L$-harmonic functions to usual $\Delta$-harmonic functions, this reduction step is done and exploited in section \ref{applications}, where we extend theorem \ref{criterium} to smooth domains, prove the criterion of existence of non-tangential limit for $L$-harmonic functions and briefly discuss the Beurling minimal principle. The term "Beurling's minimal principle" was introduced in the paper \cite{VM}  where the result  of A.Beurling on the behavior of positve harmonic functions was extended to higher dimensions and generalized to positive solutions of elliptic operators in divergence form in sufficiently smooth domains. It can be viewed as the condition on growth of a positive harmonic function along the sequence of points which ensures the boundary measure to have a point mass.  One of the ideas used in \cite{VM} concerned the asymptotic estimates for the Green function near the boundary and it allowed to go from a halfspace and the Laplace operator to $C^{1,\varepsilon}$ -smooth domains and elliptic operators in the divergence form. We will use this idea to show that the Beurling minimal principle holds for some class of elliptic operators in non-divergence form as well.

 We conclude this introduction  with a reminder concerning the role played by positivity of $L$-harmonic functions.  The existence of a normal limit of the -Poisson
 integral of a (not necessarily positive) charge $\mu$ at $P\in\partial B$ is implied by the existence of its symmetric derivative at $P$, it is the classical fact due to Fatou. But this assertion cannot be reversed, see counterexample in \cite{LL}. However the converse of the Fatou teorem is true for the Poisson integrals of measures (i.e. non-negative charges). These results may be interpreted as tauberian theorems with the positivity as a tauberian condition (see section 12.12 of \cite{H} for some results of the same kind based on the Wiener tauberian theorem and, in particular, implying (with some effort) an analogous tauberian theorem for solutions of the heat equation). There exist other tauberian conditions for which the converse of the Fatou theorem holds, see \cite{WRDU},\cite{CD}, \cite{ED2}, \cite{BS}.
\section{Proof of Theorem \ref{criterium}} \label{GT}
  Consider the multiplicative group $G$ on the set $\mathbb{R}_+$. The convolution of functions on $G$ is defined by 
\begin{equation}
\label{convolution}
(f\star g) (t) = \int\limits_{0}^{+\infty} f(t/s)g(s) d\ln(s)
\end{equation}
 
 Denote by $\{F_\alpha\}_{\alpha \in \mathbb{R}}$ the family of functions on $G$:
\begin{equation}
\label{characters}
F_{\alpha}(t):= t^{\alpha}
\end{equation}
 Let $dt$ denote the Lebesgue measure on $\mathbb{R}_+$.
  It's easy to see that for any function $g\in L^1(\mathbb{R}_+, \frac{dt }{t^{\alpha+1}} )$ 
\begin{equation}
 F_\alpha \star g = F_\alpha \cdot \int\limits_{0}^{+\infty} g(t)  \frac{dt }{t^{\alpha+1}} 
\end{equation}
  We recall some useful convergence claims concerning the convolution on $G$.
\begin{proposition} \label{convolution}
 Suppose that functions $f$, $g$ satisfy: $\frac{f}{F_\alpha} \in L^{\infty}(\mathbb{R}_+, {dt})$ and  $\frac{g}{F_\alpha}\in L^1(\mathbb{R}_+, d \ln(t)  )$. Then the following properties hold:
\begin{enumerate}[(i)] 
\item \label{convolution identity}$\frac{f}{F_\alpha}\star \frac{g}{F_\alpha}=\frac{f\star g}{F_\alpha}$
\item $\sup\limits_{t \in \mathbb{R_+}}|\frac{(f\star g)(t)}{t^\alpha}|\leq \| \frac{f}{F_\alpha} \|_{L^\infty(\mathbb{R}_+, {dt})} \cdot \|\frac{g}{F_\alpha}\|_{L^1(\mathbb{R}_+, d\ln(t) )}$
\item \label{convolutioniii} If $\frac{f(t)}{t^\alpha} \mathop{\to}\limits_{t\to 0} a$  for some $a\in \mathbb{R}$, then $$\frac{(f\star g)(t)}{t^\alpha} \mathop{\to}\limits_{t\to 0} a \int\limits_{0}^{+\infty} g(t)  \frac{dt }{t^{\alpha+1}}  $$

\end{enumerate}
\end{proposition}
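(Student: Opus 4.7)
The plan is to dispose of the three parts of Proposition~\ref{convolution} in order, observing that (i) is essentially an algebraic identity and carries the other two as easy consequences.

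For part (i), I would just unwind the definition. Since $F_\alpha(t/s)=F_\alpha(t)/F_\alpha(s)$, one has
\[
\Bigl(\tfrac{f}{F_\alpha}\star \tfrac{g}{F_\alpha}\Bigr)(t)
=\int_0^{+\infty}\!\!\frac{f(t/s)}{(t/s)^\alpha}\,\frac{g(s)}{s^\alpha}\,\frac{ds}{s}
=\frac{1}{t^\alpha}\int_0^{+\infty}\!\!f(t/s)\,g(s)\,\frac{ds}{s}
=\frac{(f\star g)(t)}{t^\alpha},
\]
so (i) is a one-line change of variables. Note that this also shows, a posteriori, that $(f\star g)/F_\alpha$ is well-defined under the given hypotheses.

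Part (ii) is then immediate from (i) together with the standard $L^1\star L^\infty\subset L^\infty$ bound on the multiplicative group $G$. Write $\tilde f:=f/F_\alpha\in L^\infty(\mathbb{R}_+,dt)$ and $\tilde g:=g/F_\alpha\in L^1(\mathbb{R}_+,d\ln t)$; then $|(\tilde f\star\tilde g)(t)|\leq\|\tilde f\|_\infty\|\tilde g\|_{L^1(d\ln t)}$ because $d\ln s$ is the Haar measure of $G$, and by (i) the left-hand side equals $|(f\star g)(t)|/t^\alpha$.

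The substantive part is (iii), and here the tool is dominated convergence. By (i),
\[
\frac{(f\star g)(t)}{t^\alpha}=(\tilde f\star\tilde g)(t)=\int_0^{+\infty}\tilde f(t/s)\,\tilde g(s)\,\frac{ds}{s}.
\]
For every fixed $s>0$, $t/s\to 0$ as $t\to 0$, so by hypothesis $\tilde f(t/s)\to a$ pointwise in $s$. The integrand is dominated by the fixed function $\|\tilde f\|_\infty\,|\tilde g(s)|/s$, which is integrable on $\mathbb{R}_+$ by the assumption $\tilde g\in L^1(d\ln t)$. Dominated convergence therefore yields
\[
(\tilde f\star\tilde g)(t)\xrightarrow[t\to 0]{}a\int_0^{+\infty}\tilde g(s)\,\frac{ds}{s}=a\int_0^{+\infty}\frac{g(s)}{s^{\alpha+1}}\,ds,
\]
which is the claim.

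I do not anticipate any real obstacle: the only mild point is that $\tilde f$ is only assumed to have a limit at $0$ and to be globally bounded, so one relies on boundedness, not on continuity, to supply the $L^\infty$ majorant for DCT. Once (i) is in place, (ii) and (iii) are routine statements about approximate identities on the locally compact abelian group $(\mathbb{R}_+,\cdot)$ with Haar measure $d\ln t$.
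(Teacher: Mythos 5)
Your argument is correct in all three parts, and it is the natural proof. The paper itself does not prove Proposition~\ref{convolution} at all --- it is stated as a collection of ``recalled'' facts about convolution on the multiplicative group and then used in the proof of Theorem~\ref{criterium}, so there is no paper proof to compare against. Your treatment is exactly what one would expect to be implicit: (i) is the one-line homogeneity computation $F_\alpha(t/s)F_\alpha(s)=F_\alpha(t)$; (ii) is the standard $L^1\star L^\infty\subset L^\infty$ bound with respect to the Haar measure $d\ln s$, filtered through (i); and (iii) is dominated convergence, with the uniform bound $\|f/F_\alpha\|_{L^\infty}\,|g(s)/s^\alpha|$ serving as the integrable majorant in $L^1(d\ln s)$. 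Your closing remark is also the right one to make explicit: boundedness (not continuity) of $f/F_\alpha$ is what furnishes the majorant, and the pointwise limit at $0$ is all that DCT needs. One small cosmetic point worth a sentence if you were to write this up: the hypothesis is phrased as $f/F_\alpha\in L^\infty(\mathbb{R}_+,dt)$ rather than $L^\infty(\mathbb{R}_+,d\ln t)$, but since $dt$ and $d\ln t$ are mutually absolutely continuous the two $L^\infty$ spaces coincide, so this causes no trouble.
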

 Next, we will formulate the Wiener Tauberian theorem (see \cite{H}).
\begin{theorem} \label{Wiener}Suppose that Fourier transform(on $G$) of the function $f\in L^1(\mathbb{R}_+,d\ln(t))$ has no zeroes, i.e. $\hat f(y):=\int\limits_{\mathbb{R}_+} f(t)t^{-iy} d \ln(t)$ does not vanish for any $y\in \mathbb{R}$. Then the following holds:
\begin{enumerate}
\item ${Lin}(\{f(\lambda t)\}_{t \in \mathbb{R}})$  is dense in $ L^1(\mathbb{R}_+,d\ln(t)) $.
\item If $(f\star g)(t) \mathop{\to}\limits_{t\to 0} a$ for some $a \in \mathbb{R}$ and $g \in \L^{\infty}(\mathbb{R}_+, d\ln(t))$, then for any $h\in L^1(\mathbb{R}_+,d\ln(t))$  $$(h\star g)(t) \mathop{\to}\limits_{t\to 0} a \cdot \frac{\hat h(0)}{\hat f(0)}$$

\end{enumerate}
\end{theorem}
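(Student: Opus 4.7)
\textbf{Step 1: reduction to $(\mathbb{R},+)$.} The map $s=\ln t$ is a topological group isomorphism $(\mathbb{R}_+,\cdot)\to(\mathbb{R},+)$ which sends $d\ln t$ to the Lebesgue measure $ds$, the multiplicative convolution on $G$ to the standard additive convolution, and the character $t\mapsto t^{-iy}$ to $s\mapsto e^{-iys}$. The theorem is thereby reduced to the classical Wiener Tauberian theorem for $L^1(\mathbb{R})$, with $t\to 0^+$ becoming $s\to-\infty$ and the dilates $f(\lambda\,\cdot)$ becoming translates $\tau_{-\ln\lambda}(f\circ\exp)$. I henceforth identify $f,g,h$ with their pullbacks and write $*$ for additive convolution.

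\textbf{Step 2: (ii) from (i).} Set
\[
V:=\Bigl\{h\in L^1(\mathbb{R}):\;(h*g)(s)\to a\,\hat h(0)/\hat f(0)\text{ as }s\to-\infty\Bigr\}.
\]
The estimate $\|h_1*g-h_2*g\|_\infty\leq\|h_1-h_2\|_1\|g\|_\infty$ together with continuity of $h\mapsto\hat h(0)$ forces $V$ to be norm-closed. Since $(\tau_b h)*g=\tau_b(h*g)$ and $\hat{\tau_b h}(0)=\hat h(0)$, $V$ is translation-invariant. The hypothesis of (ii) places $f$ in $V$, and part (i) then yields $V=L^1(\mathbb{R})$.

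\textbf{Step 3: (i), density of translates.} Let $V_f$ denote the closed linear span of $\{\tau_a f:a\in\mathbb{R}\}$. Approximating translates by convolutions with mollified point masses, and any convolution $f*g$ ($g\in L^1$) by Riemann sums of translates, shows that $V_f$ is a closed ideal of the convolution Banach algebra $L^1(\mathbb{R})$, containing $f$. If $V_f\neq L^1$, Hahn-Banach produces a nonzero $\psi\in L^\infty$ with $(\psi*\check f)(a)=\int\psi(x)f(x-a)\,dx\equiv 0$, where $\check f(x):=f(-x)$.

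The central and most delicate step is to derive a contradiction from $\psi*\check f\equiv 0$ together with the nonvanishing of $\hat f$. I would argue via Wiener's local $1/f$ lemma: for each $\xi_0\in\mathbb{R}$, continuity and nonvanishing of $\hat f$ near $\xi_0$ produce a compactly supported element $m_{\xi_0}$ of the Wiener algebra $A(\mathbb{R}):=\mathcal{F}(L^1(\mathbb{R}))$ that equals $1/\hat f$ on a neighborhood $U_{\xi_0}$ of $\xi_0$; its inverse Fourier transform $k_{\xi_0}$ lies in $L^1$ and satisfies $\widehat{k_{\xi_0}*f}\equiv 1$ on $U_{\xi_0}$. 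Since $V_f$ is an ideal, convolving $k_{\xi_0}*f\in V_f$ with $L^1$-functions whose Fourier transforms are arbitrary $C_c^\infty$ bumps supported in $U_{\xi_0}$ shows that $V_f$ realises every such bump as a Fourier transform. A partition-of-unity gluing across $\mathbb{R}$ then lets $V_f$ realise every $\varphi\in C_c^\infty(\mathbb{R})$ as a Fourier transform, and since the inverse Fourier transforms of such $\varphi$ are $L^1$-dense, $V_f=L^1$, a contradiction. The only nonroutine input is the $1/f$ lemma itself, proved by a Neumann-series inversion of $1+\varepsilon$ in $A(\mathbb{R})$ on a neighborhood so small that $\|\varepsilon\|_A<1$.
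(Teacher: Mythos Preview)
The paper does not prove Theorem~\ref{Wiener}; it is simply quoted as the classical Wiener Tauberian theorem with a reference to Hardy's book \cite{H}. So there is no ``paper's own proof'' to compare against, and your proposal is in fact supplying something the paper deliberately outsources.

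Your outline is essentially the standard one and is correct in its architecture: the logarithmic change of variable in Step~1 is the right reduction, Step~2 is a clean and correct derivation of the Tauberian conclusion from the density statement, and Step~3 follows the usual route through closed translation-invariant subspaces, identification of $V_f$ with a closed ideal, and Wiener's local inversion lemma. Two minor comments. First, in Step~3 you invoke Hahn--Banach to produce $\psi$ but then never use $\psi$; the subsequent argument shows $V_f=L^1$ directly by building local inverses and gluing, so the Hahn--Banach sentence can simply be deleted. Second, your one-line justification of the $1/f$ lemma (``Neumann-series inversion of $1+\varepsilon$ on a neighborhood so small that $\|\varepsilon\|_A<1$'') hides the only genuinely delicate point: smallness of $\hat f-\hat f(\xi_0)$ in sup-norm near $\xi_0$ does not by itself give smallness in the Wiener-algebra norm; one first multiplies by a bump in $A(\mathbb{R})$ and uses that $A(\mathbb{R})$ is a Banach algebra under pointwise multiplication. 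If you want this to stand as a complete proof rather than a sketch, that step needs one more sentence.
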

 The next corollary is a straightforward consequence of the theorem \ref{Wiener} and the identity (\ref{convolution identity}) from proposition \ref{convolution}.
\begin{corollary}\label{corollary Wiener}
Let $\alpha$ be a fixed number from $\mathbb{R}$. Suppose that function $f$ satisfies
\begin{enumerate}[(a)]
\item $\frac{f}{F_\alpha} \in L^1(\mathbb{R}_+,d\ln(t)) $
\item The Fourier transform of $\frac{f}{F_\alpha}$ has no zeroes.
\end{enumerate}
 If $\frac{f\star g}{F_\alpha}(t) \mathop{\to}\limits_{t\to 0} a$ for some $a \in \mathbb{R}$ and $g$ such that $\frac{g}{F_\alpha} \in \L^{\infty}(\mathbb{R}_+, d\ln(t))$, then for any $h\in L^1(\mathbb{R}_+,\frac{d\ln(t)}{t^\alpha})$  $$\frac{h\star g}{F_\alpha}(t) \mathop{\to}\limits_{t\to 0} a \cdot \frac{\int\limits^{\mathbb{R}_+} h(t)\frac{d\ln(t)}{t^\alpha}}{\int\limits_{\mathbb{R}_+} f(t)\frac{d\ln(t)}{t^\alpha}}$$
\end{corollary}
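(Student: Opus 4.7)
The plan is to reduce the statement directly to the Wiener Tauberian theorem (Theorem \ref{Wiener}) by dividing everything in sight by $F_\alpha$ and using identity \eqref{convolution identity} from Proposition \ref{convolution}.

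First I would set $\tilde f := f/F_\alpha$, $\tilde g := g/F_\alpha$, and $\tilde h := h/F_\alpha$. Hypothesis (a) says $\tilde f \in L^1(\mathbb{R}_+, d\ln(t))$, hypothesis (b) says $\hat{\tilde f}$ has no zeros, and the assumption on $g$ becomes $\tilde g \in L^\infty(\mathbb{R}_+, d\ln(t))$. Likewise, the assumption $h \in L^1(\mathbb{R}_+, d\ln(t)/t^\alpha)$ is exactly the statement that $\tilde h \in L^1(\mathbb{R}_+, d\ln(t))$. Thus all the tilded functions satisfy the integrability/boundedness hypotheses required by Theorem \ref{Wiener}.

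Next, I would invoke identity (\ref{convolution identity}) of Proposition \ref{convolution}, which gives
\[
\frac{f \star g}{F_\alpha} = \tilde f \star \tilde g, \qquad \frac{h \star g}{F_\alpha} = \tilde h \star \tilde g.
\]
Hence the hypothesis $\frac{(f\star g)(t)}{t^\alpha} \to a$ is the statement that $(\tilde f \star \tilde g)(t) \to a$ as $t \to 0$. Since $\tilde f \in L^1(d\ln(t))$, $\hat{\tilde f}$ has no zeros, and $\tilde g \in L^\infty(d\ln(t))$, part (2) of Theorem \ref{Wiener} applies and yields
\[
(\tilde h \star \tilde g)(t) \xrightarrow[t\to 0]{} a \cdot \frac{\hat{\tilde h}(0)}{\hat{\tilde f}(0)}
\]
for every $\tilde h \in L^1(d\ln(t))$. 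Rewriting this via identity \eqref{convolution identity} gives the asserted limit for $(h\star g)/F_\alpha$.

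Finally I would identify the constant on the right-hand side. By the definition of the Fourier transform on $G$, $\hat{\tilde h}(0) = \int_{\mathbb{R}_+} \tilde h(t)\, d\ln(t) = \int_{\mathbb{R}_+} h(t)\, d\ln(t)/t^\alpha$, and similarly for $\hat{\tilde f}(0)$, which produces precisely the claimed ratio. There is no real obstacle: the corollary is essentially a bookkeeping rewriting of the Wiener Tauberian theorem with the multiplicative character $F_\alpha$ peeled off, and the only thing to check carefully is that identity \eqref{convolution identity} transports the convergence hypothesis into the hypothesis of Theorem \ref{Wiener} and back again.
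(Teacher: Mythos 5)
Your proof is correct and is precisely the argument the paper has in mind: the paper states that Corollary~\ref{corollary Wiener} is ``a straightforward consequence of the theorem~\ref{Wiener} and the identity~(\ref{convolution identity})'' and you have simply carried out that reduction — divide $f,g,h$ by $F_\alpha$, transport the convergence hypothesis and conclusion via identity~(\ref{convolution identity}), apply part~(2) of Theorem~\ref{Wiener}, and read off the constants as $\hat{\tilde h}(0)/\hat{\tilde f}(0)$.
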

 Now, we are allmost ready to prove the theorem \ref{criterium}.

 Without loss of generality we may assume that $C$ in the representation (\ref{umu}) of $u$ is equal to zero and $\mu$ is a finite measure with support contained in the unit ball $B_1$. Also we will assume that $\mu$ has no point mass at $O$(otherwise, it's clear that neither (i) nor (ii) in theorem \ref{criterium} happens). The representation (\ref{umu}) implies
$u(0,t)= \int\limits_{0}^{+\infty}\frac{ct}{(r^2+t^2)^{\frac{n}{2}}}d \mu(B(r)) $, where $c$ is equal to $\frac{\Gamma(n/2)}{\pi^{n/2}}$.  Integrating by parts we obtain \begin{equation} \label{representation 2}u(0,t)= \int\limits_{0}^{+\infty}\frac{ncrt}{(r^2+t^2)^{\frac{n}{2}+1}}\mu(B(r))dr \end{equation}
 We can rewrite it in the following form:
\begin{equation}\label{u=k*m} u(0,t)= (k\star M)(t)
\end{equation}
 Where $M(r):=  \frac{\mu(B(r))}{r^{n-1}}$ and $k(t):=\frac{nct}{(1+t^2)^{\frac{n}{2}+1}}$.  Note that $M(r)\leq \frac{K}{r^{n-1}}$, where $K$ is equal to the full variation of $\mu$.
 
We start with the proof of the implication 
{\bf (ii) $\Longrightarrow $(i)         }  in theorem \ref{criterium}.
 Using the condition (ii) and the inequality $M(t)\leq \frac{K}{t^{n-1}}$ we obtain $M(t)t^{\alpha} \in L^{\infty}(\mathbb{R}_+, d\ln(t))$. Since $k(t)t^{\alpha} \in L^1(\mathbb{R}_+,d\ln(t))$, we are able to apply  proposition \ref{convolution}(using the identity (\ref{u=k*m}))and obtain (i). Also it implies that constants in (i) and (ii) are related by $$a=b\cdot\int\limits_{0}^{+\infty} k(t)t^\alpha d \ln(t)=b\cdot \frac{\Gamma(\frac{n-\alpha+1}{2})\Gamma(\frac{\alpha+1}{2})}{\pi^{n/2}} $$
We will prove the last equality giving the explicit value of the integral with $k$  later when the Fourier transform of $k(t)t^{\alpha}$ will be calculated.

{\bf (ii) $\Longleftarrow $(i)         } 

 First, we note that $u(0,t)\geq  \int\limits_{t}^{2t}\frac{ncrt}{(r^2+t^2)^{\frac{n}{2}+1}}\mu(B(r))dr \geq K_1 \frac{\mu(B(t))}{t^{n-1}}=K_1 M(t)$, where $K_1$ is some positive constant depending only on the dimension $n$.  Hence $ \limsup\limits_{t\to+0} M(t)t^{\alpha}\leq \frac{1}{K_1}\lim u(0,t)t^\alpha$. Using the last observation and the inequality $M(t)\leq \frac{K}{t^{n-1}}$ we obtain $M(t)t^{\alpha} \in L^{\infty}(\mathbb{R}_+, d\ln(t))$. It has allready been mentioned that $k(t)t^{\alpha} \in L^1(\mathbb{R}_+,d\ln(t))$, and all that remains to apply the corollary of Wiener theorem is to compute the Fourier transform of $k(t) t^{\alpha}$. $$\widehat{kF_\alpha}(y)= \int\limits_{0}^{+\infty}\frac{nct}{(1+t^2)^{\frac{n}{2}+1}}t^{\alpha}{t^{-iy}}\frac{dt}{t}\mathop{=\!=\!=\!=}\limits^{s =\frac{1}{1+t^2}} 
\int\limits_{0}^{1}\frac{nc}{2}s^{\frac{n}{2}-1}\Bigl(\frac{1-s}{s}\Bigr)^{\alpha/2-iy/2-1/2}ds=$$ $$=\frac{nc}{2}B(\frac{n-\alpha+iy+1}{2},\frac{\alpha-iy+1}{2})=\frac{n}{2}\frac{\Gamma(n/2)}{\pi^{n/2}}B(\frac{n-\alpha+iy+1}{2},\frac{\alpha-iy+1}{2})=$$ $$=\frac{\Gamma(\frac{n-\alpha+iy+1}{2})\Gamma(\frac{\alpha-iy+1}{2})}{\pi^{n/2}}. $$
 This has no zeros. Corollary \ref{corollary Wiener} implies that 
\begin{equation} \label{weak convergence} t^\alpha(M\star g)(t)  \mathop{\to}\limits_{t\to +0} a 
\frac{\int\limits^{\mathbb{R}_+} g(t)t^\alpha d\ln(t)}{\int\limits_{\mathbb{R}_+} k(t)t^\alpha d\ln(t)}
\end{equation} for any $g\in L^1(\mathbb{R}_+,t^\alpha d\ln(t))$. 
 The last step is to deduce the strong convergence from the weak convergence (\ref{weak convergence}). First, we note that due to monotonicity of $\mu(B(r))$ the following inequality holds:
\begin{equation} \label{monotonicity}M(t_1)t_1^{n-1}\leq M(t_2)t_2^{n-1} \hbox{ for any } 0<t_1\leq t_2
\end{equation}
  Fix $\varepsilon>0$. Consider a pair of functions 
$$g_{\varepsilon}(t):= 
\left\{ \begin{matrix} \varepsilon^{-1} , && t \in[1,1+\varepsilon]\\
 0 ,&&  t \notin[1,1+\varepsilon]
\end{matrix} \right. $$
$$g_{-\varepsilon}(t):= 
\left\{ \begin{matrix} \varepsilon^{-1} , && t \in[1-\varepsilon,1]\\
 0 ,&&  t \notin[1-\varepsilon,1].
\end{matrix} \right. $$
 The inequality (\ref{monotonicity}) yields $\frac{(M\star g_{\varepsilon})(t)}{(1+\varepsilon)^{n-1}} \leq M(t) \leq \frac{(M\star g_{-\varepsilon})(t)}{(1-\varepsilon)^{n-1}}$.
 In a view of (\ref{weak convergence}) we obtain 
 $$\frac{a}{(1+\varepsilon)^{n-1}}\frac{\int\limits^{\mathbb{R}_+} g_{\varepsilon}(t)t^\alpha d\ln(t)}{\int\limits_{\mathbb{R}_+} k(t)t^\alpha d\ln(t)}\leq \liminf\limits_{t\to +0} M(t)F_\alpha(t) \leq$$ 
$$ \leq \limsup\limits_{t\to +0} M(t)F_\alpha(t)\leq \frac{a}{(1-\varepsilon)^{n-1}}\frac{\int\limits^{\mathbb{R}_+} g_{-\varepsilon}(t)t^\alpha d\ln(t)}{\int\limits_{\mathbb{R}_+} k(t)t^\alpha d\ln(t)}.$$
 Letting $\varepsilon\to 0$ we earn (ii) because $\int\limits^{\mathbb{R}_+} g_{\pm\varepsilon}(t)t^\alpha d\ln(t) \to 1$.  The proof of theorem \ref{criterium} is finished.
\section{Pointwise Estimates of Green function} \label{G-estimates}
 There is a wide literature concerning the estimates of Green function. For instance, see \cite{KOW},\cite{MGKOW}, \cite{HHSM}, \cite{AILR}, \cite{VM}, \cite{JS}, \cite{ZZ}. We will recall several pointwise estimates of Green function. 
\begin{theorem}[{\cite{HHSM}, \cite{KOW}, \cite{ZZ}}]
 \label{G1}
 Let  $\Omega$ be a $C^{1,1}$-smooth bounded domain in $\mathbb{R}^n$, $n\geq 3$ and $G(x,y)$ be the
 Green function for the Laplace operator $\Delta$. Then there is a positive constant K depending only on the diameter of $\Omega$, the curvature of $\partial \Omega$ (the Lipschitz constant in the definition of a $C^{1,1}$ domain)
 and on $n$ such that the following estimates hold
\begin{enumerate}[(i)]
\item $G(x,y)\leq K \min(1,\frac{d(x,\partial \Omega)}{|x-y|})\min(1,\frac{d(y,\partial \Omega)}{|x-y|})|x-y|^{2-n}$ 
\item $G(x,y)\geq \frac{1}{K} \min(1,\frac{d(x,\partial \Omega)}{|x-y|})\min(1,\frac{d(y,\partial \Omega)}{|x-y|})|x-y|^{2-n}$ 
\end{enumerate}
for any $x$, $y$ $\in \overline{\Omega}$.

\end{theorem}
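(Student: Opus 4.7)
The plan is to reduce to the half-space $\mathbb{R}^n_+$, where the Green function is explicit:
\[ G_+(x,y) = c_n\bigl(|x-y|^{2-n} - |x-\bar y|^{2-n}\bigr), \]
with $\bar y$ the reflection of $y$ across $\partial \mathbb{R}^n_+$. A direct computation shows
\[ G_+(x,y) \asymp \min\!\left(1, \tfrac{x_n}{|x-y|}\right) \min\!\left(1, \tfrac{y_n}{|x-y|}\right) |x-y|^{2-n}, \]
which is exactly the target form. Since $\partial \Omega$ is $C^{1,1}$, each boundary point admits a neighborhood and a bi-Lipschitz diffeomorphism with bounded Jacobian straightening the boundary, so the half-space estimate serves both as the model and as a guide for local arguments inside $\Omega$.

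For the upper bound I would proceed in stages. First, the universal bound $G(x,y) \leq c_n|x-y|^{2-n}$ follows from the maximum principle applied to the Newtonian fundamental solution minus $G(\cdot,y)$, which is nonnegative and harmonic in $\Omega \setminus \{y\}$. Next, fix $y$ and view $u(x):=G(x,y)$ as a positive harmonic function in $\Omega \setminus \overline{B_{|x-y|/2}(y)}$ vanishing on $\partial\Omega$. The $C^{1,1}$ smoothness provides a uniform exterior tangent ball at every boundary point; combined with a barrier built from $|z|^{2-n}$ (or a linear function after straightening) and Hopf's lemma, one obtains
\[ G(x,y) \leq C\,\frac{d(x,\partial \Omega)}{|x-y|}\,|x-y|^{2-n}, \qquad d(x,\partial \Omega)\leq |x-y|/2. \]
By symmetry $G(x,y)=G(y,x)$ and a second application of the same argument in the other variable, now treating $G(x,\cdot)$ as a boundary-vanishing positive harmonic function whose boundary behavior may again be amplified by Hopf, the full product estimate follows after patching the three regimes $d(x)\leq |x-y|/2$, $d(y)\leq |x-y|/2$, and both $\geq |x-y|/2$.

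For the lower bound, in the interior regime $d(x),d(y)\geq c|x-y|$ a Harnack chain of a bounded number of balls of radius comparable to $|x-y|$ connects $x$ to $y$ while keeping a definite distance from $\partial\Omega$, yielding $G(x,y)\geq c'|x-y|^{2-n}$. Near the boundary one constructs an explicit nonnegative $\Delta$-subharmonic barrier in a thin tubular neighborhood that is essentially linear in $d(\cdot,\partial \Omega)$; on the inner boundary of the tube the interior lower bound gives $G\geq c'|x-y|^{2-n}$, and the minimum principle propagates the inequality down to $\partial\Omega$ with the correct factor $d(x)/|x-y|$. The symmetric argument in $y$ closes the proof.

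The principal obstacle is to ensure that every constant appearing depends only on $\mathrm{diam}(\Omega)$, the $C^{1,1}$ bound, and $n$. The flattening diffeomorphism, the barrier construction, and the number of balls in the Harnack chain must all be quantitatively controlled by the same data; in particular, the uniform exterior ball condition on which Hopf's lemma relies, with a radius bounded below by a function of the $C^{1,1}$ norm, is precisely what forces the use of $C^{1,1}$ rather than merely $C^1$ regularity. A secondary subtlety is combining the two one-sided boundary decay estimates into a genuine product bound without losing factors, which requires applying the second Hopf step on a level set carefully chosen from the first.
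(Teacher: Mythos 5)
The paper does not prove Theorem~\ref{G1}; it is stated with attributions to \cite{HHSM}, \cite{KOW}, \cite{ZZ} and used as a black box, so there is no ``paper's own proof'' to compare against. Your outline reproduces, in broad strokes, the standard argument from that literature (Widman, Zhao, Gr\"uter--Widman): reduce to the half-space model for the correct shape of the estimate, prove the universal bound $G\leq c_n|x-y|^{2-n}$ by the maximum principle, obtain the one-sided factor $\min(1,d(x)/|x-y|)$ by a localized boundary-decay estimate, and close the product bound by symmetry together with a second boundary-decay estimate on a suitable intermediate level set; the lower bound uses Harnack chains in the interior regime and interior-ball/Hopf-type barriers near the boundary. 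So the approach is correct and essentially the one in the cited references.

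Two points deserve sharpening. First, you invoke Hopf's lemma on the way to the \emph{upper} bound, but Hopf's lemma only gives a \emph{lower} bound on the normal derivative of a supersolution; what the upper bound actually needs is an upper barrier (a nonnegative supersolution $w$ in $\Omega\cap B_r(x^*)$ with $w\gtrsim 1$ on $\Omega\cap\partial B_r(x^*)$, $w=0$ on $\partial\Omega\cap B_r(x^*)$, and $w(z)\lesssim d(z,\partial\Omega)/r$), equivalently the Carleson estimate for positive harmonic functions vanishing on a boundary portion. Hopf's lemma (or an interior tangent ball subsolution) belongs on the lower-bound side. Second, the step you describe as ``patching the three regimes'' and ``applying the second Hopf step on a level set'' is exactly a boundary Harnack/Carleson argument: fixing $y$ in the boundary regime $d(y)<|x-y|/2$, the first boundary-decay estimate gives $G(z,y)\lesssim d(y)\,|x-y|^{1-n}$ at a corkscrew point $z$ with $d(z)\sim|z-y|\sim|x-y|$, and the Carleson estimate in the $x$-variable transports this to $G(x,y)\lesssim G(z,y)\,d(x)/|x-y|$, producing the genuine product $d(x)d(y)/|x-y|^n$ with no loss. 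Naming this tool explicitly removes the vagueness you yourself flag. Finally, straightening should be done with a $C^{1,1}$ (not merely bi-Lipschitz) change of variables, since a bi-Lipschitz map would destroy the continuity of the transformed coefficients; $C^{1,1}$ regularity keeps them Lipschitz, which is what the uniform interior and exterior ball conditions---and hence the barrier constructions---feed on, exactly as you observe at the end.
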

 The theorem above is valid if we replace the Laplace operator by some other operator $L$  from the class $L^+(\lambda,\alpha,\Omega)$. It immediately follows from the next theorem.
 
\begin{theorem}[{\cite{HHSM}}]
\label{G2}
 Let  $\Omega$ be as above and suppose  that $L\in L^+(\lambda,\alpha,\Omega)$. Then there is a positive constant $K$ depending only on the diameter of $\Omega$, the curvature of $\partial \Omega$, on $\alpha$, $\lambda$ and  $n$, such that the following estimates for $G_L$, the Green function for $L$,  hold for any $x, y \in \overline{\Omega}$

\begin{equation} K^{-1} G_{\triangle}(x,y) \leq G_L(x,y)\leq K G_{\triangle}(x,y).
\end{equation}

\end{theorem}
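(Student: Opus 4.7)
The plan is to establish two-sided pointwise bounds for $G_L$ of exactly the same shape as those that Theorem \ref{G1} provides for $G_\Delta$, namely
\[
G_L(x,y) \asymp \min\!\Big(1,\tfrac{d(x,\partial\Omega)}{|x-y|}\Big)\min\!\Big(1,\tfrac{d(y,\partial\Omega)}{|x-y|}\Big)|x-y|^{2-n},
\]
with constants depending only on the data listed in the statement. Dividing these bounds by those of Theorem \ref{G1} for $G_\Delta$ immediately yields $K^{-1}G_\Delta\le G_L\le K G_\Delta$, which is the conclusion. So everything reduces to proving the asymptotic bound above for $G_L$.

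To produce this bound I would split the analysis into three regimes. First, in the \emph{near-diagonal} regime $|x-y|\le\tfrac{1}{2}\min\{d(x,\partial\Omega),d(y,\partial\Omega)\}$, the Hölder continuity of the coefficients makes the Levi parametrix available: writing $G_L(x,y)=\Gamma_L(x,y)+R(x,y)$, where $\Gamma_L$ is the classical parametrix with frozen coefficients, one has $\Gamma_L(x,y)=c(y)|x-y|^{2-n}\bigl(1+O(|x-y|^{\alpha})\bigr)$ with $c(y)$ pinched between two positive constants by the uniform ellipticity, while $R$ stays bounded; hence $G_L(x,y)\asymp|x-y|^{2-n}$, matching Theorem \ref{G1}. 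Second, in the \emph{interior far from diagonal} regime, where both points sit at comparable distance from $\partial\Omega$ and from each other, the interior Harnack inequality for positive $L$-harmonic functions, iterated along a chain of balls connecting the pair to a fixed reference configuration, produces matching two-sided bounds with constants depending only on $\lambda$, $\alpha$, $n$ and the geometry.

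The delicate regime is the \emph{near-boundary} one, where $d(x,\partial\Omega)\ll|x-y|$. Here the tool is the boundary Harnack principle for operators in $\mathrm{L}^+(\lambda,\alpha,\Omega)$: two positive $L$-harmonic functions in $\Omega\cap B_r(\xi)$ that vanish continuously on $\partial\Omega\cap B_r(\xi)$ have quotient bounded above and below by the quotient of their values at a fixed interior reference point. Applied with $u=G_L(\cdot,y)$ against a barrier-type model function that is $L$-harmonic and behaves linearly in the distance to $\partial\Omega$ (available because $\partial\Omega$ is $C^{1,1}$), this gives $G_L(x,y)\asymp\tfrac{d(x,\partial\Omega)}{|x-y|}\,G_L(x^\ast,y)$, where $x^\ast$ lies at distance comparable to $|x-y|$ inside the domain. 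Symmetrising in the $y$-variable through an intermediate reference point at distance $\sim|x-y|$ from $\partial\Omega$ then yields the product bound with both distance factors.

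The main obstacle is precisely this boundary step: for non-divergence form operators in $\mathrm{L}^+(\lambda,\alpha,\Omega)$ the boundary Harnack principle and the accompanying Carleson estimate have to be imported from the Fabes–Safonov type machinery (barrier construction and Aleksandrov–Bakelman–Pucci maximum principle), and one must keep track of how the quantitative constants depend on $\lambda$, $\alpha$, $n$ and the $C^{1,1}$-character of $\partial\Omega$ (used mainly through uniform interior and exterior ball conditions). Once this input is granted, the near-diagonal parametrix argument and the interior Harnack chain step are essentially routine within the Schauder theory for $\mathrm{L}^+(\lambda,\alpha,\Omega)$, and the three regimes patch together to give the claimed asymptotic comparability and hence the theorem.
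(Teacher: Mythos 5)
The paper does not actually prove Theorem~\ref{G2}; it is quoted verbatim from Hueber and Sieveking's paper on uniform bounds for quotients of Green functions on $C^{1,1}$ domains, and the theorem tag itself carries the citation. So there is no ``paper's proof'' to compare you against; what I can do is assess your proposed route on its own terms. Your overall plan -- establish the two-sided Widman-type pointwise bound for $G_L$ in the same form as Theorem~\ref{G1} and then divide by the bound for $G_\Delta$ -- is sound, and the three-regime split (parametrix near the diagonal, Harnack chain in the interior, boundary comparison near $\partial\Omega$) is the standard way to organize it.

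There is, however, a genuine gap in the boundary step that you describe too lightly. For $L$ in non-divergence form, the Green function is \emph{not} symmetric: $G_L(x,y)=G_{L^*}(y,x)$, and the formal adjoint $L^*$ involves second derivatives of $a_{ij}$ and first derivatives of $b_i$, neither of which exist under the mere H\"older-continuity hypothesis in Definition~\ref{L+}. Your proposal to ``symmetrise in the $y$-variable through an intermediate reference point'' therefore cannot be executed by simply applying boundary Harnack in the $y$-slot, since $G_L(x,\cdot)$ solves an adjoint equation that does not lie in $\mathrm{L}^+(\lambda,\alpha,\Omega)$, and you would need to say what replaces that step (for instance a maximum-principle comparison against explicit super/subsolutions constructed from $d(y,\partial\Omega)$, or a dual formulation via $L$-harmonic measure, which is roughly what the cited source does). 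A secondary point: invoking the full Fabes--Safonov/ABP apparatus for the boundary Harnack principle is more machinery than the hypotheses call for; with H\"older-continuous coefficients and $C^{1,1}$ boundary one has Schauder estimates up to the boundary, so the requisite barrier and comparison arguments can be carried out classically, which also makes it easier to see that the resulting constant $K$ depends only on $\lambda$, $\alpha$, $n$, the diameter and the $C^{1,1}$-character of $\partial\Omega$, as the statement demands.
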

         We need a tool to compare Green functions for different elliptic operators with close coefficients.  The tool is  the following theorem by H.Hueber and M.Sieveking. 
\begin{theorem}[{\cite{HHSM2}}]
\label{G3}
Assume that $\Omega$ is $C^{2,\alpha}$-smooth bounded domain and let  $\{L_n\}_{n=1}^{\infty}$ be a sequence of elliptic operators from $\mathrm{L}^+(\lambda,\alpha,\Omega)$ such that the coefficients of $L_n$  converge to the respective coefficients of $L\in \mathrm{L}^+(\lambda,\alpha,\Omega)$ uniformly in $\overline\Omega$. Then there is a sequence of numbers $K_n \geq 1$, such that  $K_n \to 1$ and

 $$K_n^{-1} G_{L_n}(x,y) \leq G_L(x,y)\leq K_n G_{L_n}(x,y), n=1,2,\dots, x,y \in \overline{\Omega}.$$
 In other words, $G_{L_n}(x,y)= G_L(x,y)(1+o(1))$ where $o(1)$ is uniform with respect to $x,y \in \overline\Omega$.

\end{theorem}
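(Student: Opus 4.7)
The plan is a compactness/blow-up argument rather than a direct perturbation estimate. By Theorem \ref{G2} applied both to $L$ and to each $L_n$ (all lying in $\mathrm{L}^+(\lambda,\alpha,\Omega)$), the ratios $G_{L_n}(x,y)/G_L(x,y)$ already satisfy a uniform two-sided bound $K^{-2}\le G_{L_n}(x,y)/G_L(x,y)\le K^{2}$ independent of $n$, so the task is to promote this to $1+o(1)$.

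First I would establish pointwise convergence $G_{L_n}(x,y)\to G_L(x,y)$ for each fixed $(x,y)$ with $x\ne y$. The Schauder estimates up to the boundary (cf.\ \cite{HHSM2}, lemma 2.1) applied on $\Omega\setminus B_{|x-y|/2}(y)$ render the family $\{G_{L_n}(\cdot,y)\}$ precompact in $C^{2}_{\mathrm{loc}}(\overline{\Omega}\setminus\{y\})$, thanks to the uniform coefficient bounds in Definition \ref{L+} and the uniform pointwise upper bound on $G_{L_n}$ coming from Theorem \ref{G2}. Any subsequential limit $u$ is $L$-harmonic (the uniform coefficient convergence passes through), vanishes on $\partial\Omega$, and inherits the same $|\cdot-y|^{2-n}$ leading singularity up to the constant determined by $a_{ij}(y)$; the uniqueness of the Green function then forces $u=G_L(\cdot,y)$, so the full sequence converges.

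Next I would upgrade to uniform convergence on $\{(x,y)\in\overline\Omega\times\overline\Omega:|x-y|\ge\delta\}$ for each $\delta>0$. If it fails one can extract $(x_n,y_n)\to(x_*,y_*)$ with $|x_*-y_*|\ge\delta$ along which $G_{L_n}(x_n,y_n)\not\to G_L(x_*,y_*)$. The same Arzel\`a--Ascoli argument, now on $\Omega\setminus B_{\delta/3}(y_*)$ with basepoints $y_n\to y_*$, yields a subsequential $C^{2}$-limit which again must coincide with $G_L(\cdot,y_*)$, contradicting the assumption. Since $G_L(x,y)$ is bounded below on this set by Theorem \ref{G2}, dividing gives $G_{L_n}/G_L\to 1$ uniformly there.

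Finally I would handle a neighbourhood of the diagonal by rescaling. For $(x,y)$ with $\rho:=|x-y|$ small write $x=y+\rho\xi$ with $|\xi|=1$, and set $\widetilde G_{L_n}(\xi):=\rho^{n-2}G_{L_n}(y+\rho\xi,y)$, and likewise $\widetilde G_L$. Each $\widetilde G_{L_n}$ solves an elliptic equation whose second-order coefficients $a_{ij}^{(n)}(y+\rho\,\cdot)$ converge uniformly on compact $\xi$-sets (using the H\"older continuity in (b) of Definition \ref{L+}) to the constant-coefficient operator with matrix $a_{ij}(y_*)$, where $y_*$ is any accumulation point of $y$. A further Schauder compactness argument away from $\xi=0$ identifies the blow-up limits of both $\widetilde G_{L_n}$ and $\widetilde G_L$ as the same fundamental solution of this limiting constant-coefficient operator, so their ratio tends to $1$ uniformly. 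The hard step will be the boundary case $y_*\in\partial\Omega$: one must straighten $\partial\Omega$ near $y_*$ using the $C^{2,\alpha}$ chart, verify that the induced H\"older seminorms of the transplanted coefficients remain controlled uniformly in $n$, and identify the blow-up limit as a half-space Green function of a constant-coefficient operator, treating the three regimes $\mathrm{dist}(y_n,\partial\Omega)/\rho_n\to 0,\;c,\;\infty$ separately. Once these uniform ratios near the diagonal are combined with the uniform convergence away from it, one obtains the multiplicative factor $K_n=1+o(1)$ asserted in the theorem.
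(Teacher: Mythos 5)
This theorem carries no proof in the paper: it is quoted verbatim from Hueber and Sieveking \cite{HHSM2} (``Continuous bounds for quotients of Green functions''), so there is no in-paper argument to compare against. I will therefore just assess your proposal on its own merits.

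Your overall architecture (two-sided comparability from Theorem~\ref{G2}, pointwise convergence via Schauder compactness and uniqueness of the Green function, then uniform convergence away from the diagonal, then a blow-up near the diagonal) is a reasonable compactness scheme, and you correctly flag the diagonal/boundary interaction as delicate. However, there is a genuine gap earlier than the place you flagged. In your third step you write that on the set $\{(x,y):|x-y|\ge\delta\}$ the function $G_L(x,y)$ is bounded below by Theorem~\ref{G2}, so that dividing the uniform additive convergence $G_{L_n}\to G_L$ by $G_L$ yields $G_{L_n}/G_L\to1$ uniformly there. This is false: $G_L$ vanishes whenever $x$ or $y$ lies on $\partial\Omega$, and Theorem~\ref{G2} only gives the lower bound
$$G_L(x,y)\ \ge\ \frac{1}{K}\,\min\!\Bigl(1,\tfrac{d(x,\partial\Omega)}{|x-y|}\Bigr)\min\!\Bigl(1,\tfrac{d(y,\partial\Omega)}{|x-y|}\Bigr)\,|x-y|^{2-n},$$
which degenerates as $d(x,\partial\Omega)\to0$ or $d(y,\partial\Omega)\to0$. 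Uniform \emph{additive} convergence of Green functions is strictly weaker than the uniform \emph{multiplicative} estimate $G_{L_n}=G_L\,(1+o(1))$ in precisely this boundary regime, and this is the content of the theorem, not a triviality one can divide through. The same issue re-enters your diagonal blow-up: in the regime $d(y_n,\partial\Omega)/\rho_n\to0$ you have both a singularity and a vanishing to track simultaneously, and the constant-coefficient half-space limit alone does not control the ratio uniformly without an accompanying boundary-Harnack-type statement.

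To repair the argument you would need something that controls the quotient near $\partial\Omega$, not just the difference, for instance: (a) compare the normal derivatives $\partial_\nu G_{L_n}(\cdot,y)$ and $\partial_\nu G_L(\cdot,y)$ on the boundary (Hopf lemma with uniform constants) to get a multiplicative estimate after factoring out $d(x,\partial\Omega)$; or (b) invoke a boundary Harnack principle with constants uniform in the class $\mathrm{L}^+(\lambda,\alpha,\Omega)$ to transfer the interior ratio estimate up to the boundary; or (c) argue directly on the ratio $G_{L_n}/G_L$ via the maximum principle applied to $L_n G_L$, using $L_n G_L=(L_n-L)G_L=o(1)\cdot|D^2G_L|+\cdots$, and then bound this perturbation by $o(1)\cdot G_L$ using the pointwise gradient/Hessian estimates of the Green function (this is closer to the method of \cite{HHSM2}). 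As written, the step promoting additive to multiplicative convergence is unjustified and the proof does not close.
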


The next corollary is a simple consequence of theorems \ref{G1}, \ref{G2}. It provides two-sided estimates for the $L$-Poisson kernel.
\begin{corollary}[\cite{AILR}, \cite{JS},\cite{KOW}]
\label{P1}
 Let  $\Omega$, $L$, $G_L$ be as in theorems \ref{G1}, \ref{G2}  and $\nu(x)$ denote inner  conormal vector at  $x\in \partial \Omega$ with respect to $L$. Then there is a positive constant K such that the following inequalities are true for any $x\in \partial \Omega  $, $y \in \Omega$
$$ \frac{1}{K}\frac{ d(y,\partial\Omega)}{ |x-y|^n}\leq \frac{\partial G_L(x,y)}{\partial \nu(x)} \leq K \frac{ d(y,\partial\Omega)}{ |x-y|^n}. $$ 
\end{corollary}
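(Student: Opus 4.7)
The plan is to interpret the conormal derivative as the one-sided limit
\[ \frac{\partial G_L(x,y)}{\partial \nu(x)} \;=\; \lim_{t \to 0^+} \frac{G_L(x_t, y)}{t}, \qquad x_t := x + t\,\nu(x), \]
which is legitimate since $G_L(\cdot,y)$ vanishes on $\partial\Omega$ and has the boundary regularity imported from \cite{HHSM2} and cited in the introduction. The proof then reduces to plugging Theorems \ref{G1} and \ref{G2} into the numerator and tracking what happens as $t \to 0^+$.

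Two geometric facts drive the estimate. First, ellipticity forces the inner conormal $\nu(x) = (a_{ij}(x))n(x)$ to be genuinely transversal, with $\nu(x) \cdot n(x) = \sum_{i,j} a_{ij}(x) n_i(x) n_j(x) \in [\lambda^{-1}, \lambda]$ and $|\nu(x)| \leq \lambda$; combined with the $C^{1,1}$-smoothness of $\partial\Omega$, this yields constants $c_1, c_2 > 0$, depending only on $\lambda$ and the curvature bound, such that $c_1 t \leq d(x_t, \partial\Omega) \leq c_2 t$ for all sufficiently small $t$. Second, since $x \in \partial\Omega$, one trivially has $d(y, \partial\Omega) \leq |x-y|$, and for $t < |x-y|/(2\lambda)$ the quantity $|x_t - y|$ is comparable to $|x-y|$. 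Applying Theorems \ref{G1} and \ref{G2} at the interior pair $(x_t, y)$ and exploiting these facts, both truncation factors in the estimate collapse to their unsaturated ratios (up to multiplicative constants): $\min(1, d(x_t,\partial\Omega)/|x_t-y|)$ equals $d(x_t,\partial\Omega)/|x_t-y|$ for small $t$, while $\min(1, d(y,\partial\Omega)/|x_t-y|) \asymp d(y,\partial\Omega)/|x_t-y|$ since its argument is bounded. The outcome is
\[ G_L(x_t, y) \;\asymp\; \frac{d(x_t, \partial\Omega)\, d(y, \partial\Omega)}{|x_t - y|^n}. \]

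Dividing by $t$ and letting $t \to 0^+$, the factor $d(x_t,\partial\Omega)/t$ stays in $[c_1,c_2]$ while $|x_t-y| \to |x-y|$, yielding the claimed two-sided bound with an overall constant depending only on $n$, $\lambda$, the $C^{1,1}$-norm of $\partial\Omega$ and the constants of Theorems \ref{G1}, \ref{G2}. The only real subtlety is the existence of the limit itself, namely the conormal differentiability of $G_L(\cdot,y)$ at $\partial\Omega$; this is precisely the boundary smoothness imported from \cite{HHSM2}, and once granted the argument is essentially bookkeeping and the constants propagate straightforwardly.
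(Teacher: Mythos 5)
Your proof is correct and fills in exactly the argument the paper leaves implicit: the paper asserts the corollary is a ``simple consequence'' of Theorems \ref{G1} and \ref{G2} (citing \cite{AILR}, \cite{JS}, \cite{KOW} for the details) and gives no proof, and your route --- writing the conormal derivative as $\lim_{t\to 0^+} G_L(x+t\nu(x),y)/t$, noting $d(x_t,\partial\Omega)\asymp t$ by uniform transversality of $\nu(x)$, and observing that both truncation factors in the Green function estimate desaturate near the boundary --- is the standard way to realize that consequence. The one genuinely delicate point, the existence of the conormal derivative of $G_L(\cdot,y)$ up to $\partial\Omega$, is correctly attributed to the boundary regularity imported from \cite{HHSM2}.
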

  We denote the Lebesgue surface measure on $\partial \Omega$ by $dS$ and the $L$-harmonic measure viewed from point $y\in \Omega$ by $d\omega_y$. In sufficiently smooth domains the L-harmonic measure is absolutely continuous  with respect to the surface measure and we denote by $P_L(x,y,\Omega)$ the density of $\omega_y$ with respect to $dS$ at $x \in \partial \Omega$ (see (\ref{representation 1}) ). Sometimes we omit the indices $\Omega$ or $L$ and simply write $P(x,y)$ or $P_L(x,y)$. Note that $ \frac{\partial G_L(x,y)}{\partial \nu(x)}= \kappa P_L(x,y)$ where $\kappa$ is a constant depending only on the normalization of Green function. So corollary \ref{P1} is equivalent to 
\begin{equation} \label{pe}
 \frac{1}{K}\frac{ d(y,\partial\Omega)}{ |x-y|^n}\leq P_L(x,y) \leq K \frac{ d(y,\partial\Omega)}{ |x-y|^n}.
\end{equation} The next corollary follows immediately from theorem \ref{G3} and corollary \ref{P1}.
 \begin{corollary}
\label{P2}
 Assume $\Omega$ is a $C^{2,\alpha}$-smooth bounded domain and let  $\{L_n\}_{n=1}^{\infty}$ be a sequence of elliptic operators in $\mathrm{L}^+(\lambda,\alpha,\Omega)$ such that the coefficients of $L_n$  tend to the respective coefficients of $L\in\mathrm{L}^+(\lambda,\alpha,\Omega)$ uniformly in $\overline\Omega$. Then
$$P_{L_n}(x,y)= P_{L}(x,y)(1+o(1))$$ 
where $o(1)$ is uniform with respect to $x\in \partial \Omega$, $y \in \Omega$. 
\end{corollary}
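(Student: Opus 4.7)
The plan is to combine the uniform multiplicative comparison of Green functions from Theorem \ref{G3} with the fact that the Poisson kernel is, up to a normalization, the conormal boundary derivative of $G_L$, and then pass the pointwise estimate from the bulk to the boundary. Recall that $P_L(x,y)=\kappa^{-1}\partial_{\nu(x)} G_L(x,y)$ for $x\in\partial\Omega$ and $y\in\Omega$, where $\nu(x)=(a_{ij}(x))n(x)$ is the inner conormal with respect to $L$; the analogous identity holds for each $L_n$ with $\nu_n(x)=(a_{ij}^{(n)}(x))n(x)$. By Theorem \ref{G3} there is a sequence $\delta_n\to 0$ such that
$$|G_{L_n}(x',y)-G_L(x',y)|\leq \delta_n\,G_L(x',y)\qquad\text{for all }x',y\in\overline\Omega.$$
Everything reduces to transferring this multiplicative estimate to conormal derivatives at the boundary, uniformly in $(x,y)\in\partial\Omega\times\Omega$.

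First I would pass from the interior comparison to the normal derivative at the boundary. Fix $x\in\partial\Omega$, $y\in\Omega$, and set $x_t=x+t\,n(x)$ for small $t>0$. Since $G_L(x,y)=G_{L_n}(x,y)=0$, dividing the displayed inequality by $t$ gives
$$\left|\frac{G_{L_n}(x_t,y)}{t}-\frac{G_L(x_t,y)}{t}\right|\leq \delta_n\,\frac{G_L(x_t,y)}{t}.$$
Both $G_L(\cdot,y)$ and $G_{L_n}(\cdot,y)$ are $C^1$ up to the boundary away from $y$, as invoked from \cite{HHSM2} in the preliminaries. Letting $t\to 0^+$ yields
$$\bigl|\partial_{n(x)}G_{L_n}(x,y)-\partial_{n(x)}G_L(x,y)\bigr|\leq \delta_n\,\partial_{n(x)}G_L(x,y),$$
and since $\delta_n$ is independent of $(x,y)$, the estimate is uniform.

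Second I would go from the normal derivative back to the conormal derivative for each operator. Because $G_L(\cdot,y)$ vanishes on $\partial\Omega$, its tangential derivatives at $x$ vanish, so $\nabla_xG_L(x,y)=\partial_{n(x)}G_L(x,y)\,n(x)$ and
$$\partial_{\nu(x)}G_L(x,y)=\bigl(n(x)\cdot\nu(x)\bigr)\,\partial_{n(x)}G_L(x,y),$$
and similarly for $L_n$ with $\nu_n$ in place of $\nu$. The ellipticity bound (a) of Definition \ref{L+} gives $n(x)\cdot\nu(x)\geq \lambda^{-1}>0$, while the uniform convergence $a_{ij}^{(n)}\to a_{ij}$ on $\overline\Omega$ yields $n(x)\cdot\nu_n(x)=n(x)\cdot\nu(x)+o(1)$ uniformly in $x\in\partial\Omega$. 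Combining this with the first step, the ratio of conormal derivatives, and hence the ratio $P_{L_n}(x,y)/P_L(x,y)$, tends to $1$ uniformly on $\partial\Omega\times\Omega$, which is the claim.

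The only delicate point is the interchange of $t\to 0^+$ with the multiplicative $\delta_n$-estimate, but since $\delta_n$ does not depend on $x_t$ or $y$ this is automatic. The lower bound $P_L(x,y)\geq K^{-1}d(y,\partial\Omega)|x-y|^{-n}>0$ from Corollary \ref{P1} ensures that the denominator never vanishes and justifies phrasing the conclusion as a multiplicative asymptotic.
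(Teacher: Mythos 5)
Your argument is correct, and since the paper only remarks that Corollary \ref{P2} ``follows immediately from theorem \ref{G3} and corollary \ref{P1}'' without writing a proof, there is nothing to compare it against except that intended one-liner --- which your proof expands in the natural way. You are right to flag the point that most readers would elide: the conormal directions $\nu$ and $\nu_n$ differ, so after dividing the uniform Hueber--Sieveking bound $K_n^{-1}G_{L_n}\le G_L\le K_nG_{L_n}$ by $t$ and letting $t\to0^+$ along the inward normal (licit since both Green functions vanish on $\partial\Omega$ and are $C^1$ up to the boundary away from the pole, with $\delta_n$ independent of $(x,y)$), one only controls the ratio of \emph{normal} derivatives; the extra factor $n(x)^{T}A^{(n)}(x)n(x)/n(x)^{T}A(x)n(x)$ must then be handled separately, and it tends to $1$ uniformly by ellipticity (which bounds it away from $0$) together with uniform convergence of the leading coefficients. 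The final appeal to Corollary \ref{P1} to keep the denominator strictly positive closes the argument.
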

 
\section{Asymptotic behavior of $L$-harmonic measure} \label{asymptotic formula}
 Throughout this section $\Omega$ will be a $C^{2,\alpha}$-smooth and bounded domain in $\mathbb{R}^n$, $n\geq 3$ and operator  $L$ is in $\mathrm{L}^+(\lambda,\alpha,\Omega)$.
 The main point of this section is that the asymptotic behavior of  $P_L(x,y)$ as $d(y,\partial\Omega) \to +0$ is similar to the behavior of harmonic measure in a halfspace.

\begin{theorem}
\label{P3}
Assume that the origin point $O$ is in $ \partial \Omega$ and $\big{(}a_{ij}(O) \big{)}_{i,j=1}^{n}$ is the identity matrix. Put $\kappa_n:=\frac{\Gamma (n/2)}{\pi^{n/2}}$. Then the following asymptotic identities are true:
\begin{enumerate}[(i)]
 \item \label{P3i}$$P_L(x,y)  \sim \kappa_n\frac{ d(y,\partial\Omega)}{ d(x, y)^n}  $$
as $x,y\to O, y \in \Omega, x\in\partial \Omega$.
\item \label{P3ii}$$P_L(x,y) = \kappa_n\frac{ d(y,\partial\Omega)}{ d(x, y)^n}(1+o(1)) +o(1)  $$
as $y\to O, y \in \Omega $, both $o(1)$ being uniform with respect to $x\in\partial\Omega$.
\item  \label{P3iii}$$\left \| P_L(\cdot,y) - \kappa_n\frac{ d(y,\partial\Omega)}{ d(\cdot, y)^n}  \right \|_{L^1(\partial\Omega,dS)}  \xrightarrow{
\begin{matrix}
y \in \Omega\\
y\to O
\end{matrix}} 0 .$$
\end{enumerate}

\end{theorem}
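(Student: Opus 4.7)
Reduce to the model case of the Laplacian on the tangent half-space $H$ at $O$ by a blow-up near $O$, and pass to the limit using the Green function tools of Section \ref{G-estimates} together with uniform boundary Schauder estimates. Parts (ii) and (iii) will follow from (i) via the pointwise bound (\ref{pe}); the real work is (i).

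\emph{Proof of (i).} Write $Q(x,y):=\kappa_n d(y,\partial\Omega)/d(x,y)^n$. For sequences $x_k\in\partial\Omega$, $y_k\in\Omega$ with $x_k,y_k\to O$, set $r_k:=|x_k-y_k|$, $\tilde x_k:=x_k/r_k$, $\tilde y_k:=y_k/r_k$, and consider the rescaled domain $\Omega_k:=r_k^{-1}\Omega$ together with the rescaled operator $L_k$ on $\Omega_k$ given by $a_{ij}^k(z):=a_{ij}(r_kz)$, $b_i^k(z):=r_kb_i(r_kz)$, $c^k(z):=r_k^2c(r_kz)$. The Poisson-kernel scaling identity
$$P_{L_k,\Omega_k}(\tilde x_k,\tilde y_k)=r_k^{n-1}P_L(x_k,y_k),$$
together with the identical scaling for $Q$, reduces the claim to $P_{L_k,\Omega_k}(\tilde x_k,\tilde y_k)/Q_{\Omega_k}(\tilde x_k,\tilde y_k)\to1$. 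As $r_k\to0$: Hölder continuity and $a_{ij}(O)=\delta_{ij}$ force $a_{ij}^k\to\delta_{ij}$ uniformly on bounded sets, the lower-order coefficients $b_i^k,c^k$ vanish at the same rate, and $C^{2,\alpha}$-smoothness of $\partial\Omega$ at $O$ makes $\partial\Omega_k$ converge in $C^{2,\alpha}$ to $\partial H$ on every ball. Passing to a subsequence with $\tilde x_k\to\tilde x_\infty$, $\tilde y_k\to\tilde y_\infty$ ($|\tilde x_\infty-\tilde y_\infty|=1$), I straighten $\partial\Omega_k$ near $O$ by $C^{2,\alpha}$-diffeomorphisms $\Phi_k\to\mathrm{Id}$ onto a fixed half-ball $B\cap H$; the pushed-forward operators $\tilde L_k$ lie in $\mathrm{L}^+$ uniformly, with coefficients converging uniformly to those of $\Delta$. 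Then the Schauder perturbation argument underlying Theorem \ref{G3}, reinforced by uniform boundary Schauder estimates, yields $C^{2,\alpha}$-convergence of $G_{\tilde L_k}$ up to the boundary on compacts, hence convergence of the boundary normal derivatives, i.e.\ convergence of the Poisson kernels $P_{\tilde L_k}\to P_{\Delta,H}$. Transferring back gives $P_{L_k,\Omega_k}(\tilde x_k,\tilde y_k)\to\kappa_n d(\tilde y_\infty,\partial H)/|\tilde x_\infty-\tilde y_\infty|^n$, which matches $\lim Q_{\Omega_k}(\tilde x_k,\tilde y_k)$. The degenerate case $\tilde y_\infty\in\partial H$ is handled by expanding both sides to first order in the boundary distance via the Hopf lemma combined with the same Schauder convergence.

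\emph{Proofs of (ii) and (iii).} Fix $\varepsilon>0$. By (i) there is $\delta>0$ with $|P_L(x,y)-Q(x,y)|\le\varepsilon Q(x,y)$ whenever $|x|,|y-O|\le\delta$. For $|x|\ge\delta$, $|y-O|\le\delta/2$, one has $d(x,y)\ge\delta/2$, and (\ref{pe}) bounds both $P_L(x,y)$ and $Q(x,y)$ by $C\delta^{-n}d(y,\partial\Omega)=o(1)$ uniformly in such $x$. Combining yields $P_L=Q(1+o(1))+o(1)$ uniformly on $\partial\Omega$, which is (ii). Integrating over $\partial\Omega$,
$$\|P_L(\cdot,y)-Q(\cdot,y)\|_{L^1(\partial\Omega,dS)}\le o(1)\int_{\partial\Omega}Q(\cdot,y)\,dS+o(1)|\partial\Omega|;$$
since $\int_{\partial\Omega}Q(\cdot,y)dS$ stays bounded as $y\to O$ (near $O$ it approximates the half-space Poisson kernel whose boundary integral is $1$, and away from $O$ it is $O(d(y,\partial\Omega))\to0$), (iii) follows.

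The main obstacle is the limit identification in (i): both the operator and the domain vary under rescaling, so Theorems \ref{G1}--\ref{G3} and Corollary \ref{P2} do not apply off the shelf. The boundary-straightening step reduces everything to a fixed reference piece of $H$, so that the Schauder perturbation argument of Theorem \ref{G3} applies; the added ingredient needed is a uniform boundary Schauder estimate that promotes the $C^0$-convergence of Green functions to $C^{2,\alpha}$-convergence up to the boundary, hence to convergence of normal derivatives.
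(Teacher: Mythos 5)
Your overall strategy for (ii) and (iii) from (i) is essentially the same as the paper's (the paper obtains (iii) a touch more cheaply via $\|P_L(\cdot,y)\|_{L^1}\le1$), and your opening scaling identity and the observation that H\"older continuity forces the rescaled coefficients to the Laplacian are correct and are the same germ of an idea the paper uses. But your proof of (i) takes a genuinely different route — a simultaneous blow-up of domain \emph{and} operator — and it has a serious gap that the paper's structure is specifically built to avoid.

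The gap is localization. After you straighten $\partial\Omega_k$ near $O$ onto a fixed piece $B\cap H$, the pushed-forward function $\Phi_{k*}G_{L_k}(\cdot,y,\Omega_k)$ is \emph{not} the Green function of $\tilde L_k$ on $B\cap H$: it vanishes only on the flat part of the boundary, not on the spherical cap $\partial B\cap H$. So the Schauder perturbation argument of Theorem \ref{G3}, which compares Green functions of different operators on the \emph{same} bounded smooth domain, does not apply to the object you actually have; there is no fixed reference problem to perturb around. To repair this you need exactly the ingredient the paper isolates as Lemma \ref{simlemma}: if $\Omega_1\subset\Omega$ share a boundary piece around $O$, then $P_L(x,y,\Omega)\sim P_L(x,y,\Omega_1)$ as $x,y\to O$. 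Its proof is a direct maximum-principle estimate of $H=G_L(\cdot,\cdot,\Omega)-G_L(\cdot,\cdot,\Omega_1)$ using the two-sided Green function bounds (Theorems \ref{G1}, \ref{G2}), which shows $\partial_\nu H = o(P_L)$ — and, importantly, this estimate is uniform in $d(y,\partial\Omega)$, so it also disposes of your ``degenerate case'' $\tilde y_\infty\in\partial H$ without any appeal to Hopf-type expansions.

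Once that localization lemma is available, the paper's actual proof is simpler than your blow-up: it keeps the domain fixed. It takes a single smooth convex $Q\subset\Omega$ with a flat boundary piece at $O$, rescales only the operator ($L_k:=L(\tfrac1k\cdot)$, suitably normalized), applies Corollary \ref{P2} on the \emph{fixed} domain $Q$ to get $P_{L_k}(\cdot,\cdot,Q)\sim^{1+\varepsilon}P_\Delta(\cdot,\cdot,Q)$, and then uses the scaling identity to transfer this to $P_L\sim^{1+\varepsilon}P_\Delta$ on $\tfrac1kQ$. Lemma \ref{simlemma} then replaces $\tfrac1kQ$ by $\Omega$ on the $L$-side and by $\mathbb{R}^n_+$ on the $\Delta$-side, where the Poisson kernel is explicit. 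This avoids both your need for ``uniform boundary Schauder estimates'' promoting $C^0$- to $C^{2,\alpha}$-convergence (which you flagged but did not supply) and the varying-domain difficulty. If you want to salvage your blow-up framework, the clean fix is to first prove the localization lemma and use it to cut both $\Omega_k$ and $H$ down to a fixed bounded smooth domain before invoking the Green-function perturbation results; with that in place your argument becomes a legitimate alternative proof.
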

 The previous theorem follows from methods of \cite{JS} and \cite{VM} where similar estimates appear. We outline the proof based on the theorem on convergence of Green functions by Hueber and Sieveking  for reader's convenience.  First, we need the following lemma.
\begin{lemma}
\label{simlemma}
 Suppose that bounded $C^{2,\alpha}$-smooth domain $\Omega_1\subset\Omega$ has a common boundary part with $\Omega$ containing the origin point $O$: $B_r(O)\cap  \Omega_1= B_r(O)\cap  \Omega$ for some $r>0$, where $B_r(O)$ denotes the ball centered at $O$ and of radius $r$. Then 
$$P_L(x,y,\Omega)  \sim P_L(x,y,\Omega_1) $$
as $x,y\to O, y \in \Omega, x\in\partial \Omega$.
\end{lemma}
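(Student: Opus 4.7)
The plan is to analyze the difference $u_y(z) := G_L(z, y, \Omega) - G_L(z, y, \Omega_1)$ for $z \in \Omega_1$ and to show that its inner conormal derivative at $x \in \partial\Omega \cap B_r(O)$ is negligible compared with $P_L(x, y, \Omega_1)$ as $x, y \to O$. For $y \in \Omega \cap B_r(O)$, the hypothesis forces $y \in \Omega_1$, and the singularities of the two Green functions at $y$ cancel, so $u_y$ is $L$-harmonic throughout $\Omega_1$; by monotonicity of the Green function in the domain (using $c \leq 0$), $u_y \geq 0$. Moreover, $u_y$ vanishes on $\partial\Omega_1 \cap B_r(O) = \partial\Omega \cap B_r(O)$, where both Green functions are zero. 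On the remaining boundary piece $\partial\Omega_1 \setminus B_r(O)$ one has $u_y(\xi) = G_L(\xi, y, \Omega)$, and the Green function estimates of theorems \ref{G1}, \ref{G2}, together with $|\xi - y| \geq r/2$ (once $y \in B_{r/2}(O)$), yield $u_y(\xi) \leq K(r)\, d(y, \partial\Omega)$, with $K(r)$ depending only on $r$, the geometry of $\Omega_1$, and $L$.

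Next, since $u_y$ is $L$-harmonic in $\Omega_1$, vanishes on the $C^{2,\alpha}$-smooth piece $\partial\Omega_1 \cap B_r(O)$, and $L$ has $C^\alpha$ coefficients, the boundary Schauder estimates applied on $\Omega_1 \cap B_{r/2}(O)$ give, uniformly for $x \in \partial\Omega \cap B_{r/4}(O)$,
\begin{equation*}
0 \leq \frac{\partial u_y}{\partial \nu}(x) \leq C\, \|u_y\|_{L^\infty(\Omega_1 \cap B_{r/2}(O))} \leq C'\, d(y, \partial\Omega),
\end{equation*}
where the last inequality uses the maximum principle and the boundary bound above. Since $P_L(x, y, \Omega) - P_L(x, y, \Omega_1) = \kappa^{-1} \partial u_y/\partial\nu(x)$, this gives $0 \leq P_L(x, y, \Omega) - P_L(x, y, \Omega_1) \leq C''\, d(y, \partial\Omega)$ for all relevant $x, y$.

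Finally, for $y \in \Omega_1 \cap B_{r/2}(O)$ the nearest boundary point of $\Omega_1$ lies inside $B_r(O)$, where $\partial\Omega = \partial\Omega_1$, so $d(y, \partial\Omega_1) = d(y, \partial\Omega)$. Combining with the lower bound of corollary \ref{P1}, namely $P_L(x, y, \Omega_1) \gtrsim d(y, \partial\Omega)/|x - y|^n$, one gets
\begin{equation*}
0 \leq \frac{P_L(x, y, \Omega)}{P_L(x, y, \Omega_1)} - 1 \leq C'''\, |x - y|^n \xrightarrow[x, y \to O]{} 0,
\end{equation*}
which is the claimed equivalence. The main technical point is the uniform boundary Schauder bound in the second step: one must ensure that the Schauder constant depends only on the class $\mathrm{L}^+(\lambda, \alpha, \Omega_1)$ and the local $C^{2, \alpha}$ geometry of $\partial\Omega_1 \cap B_r(O)$, and not on the boundary point $x$. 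This is standard after a translation and $C^{2,\alpha}$ diffeomorphism straightening the boundary near $x$, but it is the one place where the regularity hypotheses on $\Omega_1$ and $L$ are decisive.
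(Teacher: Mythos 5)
Your overall outline matches the paper's: both write the difference of the two Green functions, observe that the discrepancy enters only through the far part of $\partial\Omega_1$, and close by comparing the resulting conormal derivative bound against the lower bound $P_L(x,y,\Omega_1)\gtrsim d(y,\partial\Omega_1)/|x-y|^n$ from Corollary \ref{P1}. However, there is a genuine gap in the middle step, and it stems from a wrong claim about which variable the Green function is $L$-harmonic in.

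With the paper's conventions (see the representation $u(x)=\int_{\partial\Omega}\partial_{\nu(\xi)}G_L(\xi,x)\,d\mu(\xi)$ and the estimate $P_L(x,y)\asymp d(y,\partial\Omega)/|x-y|^n$ for $x\in\partial\Omega$, $y\in\Omega$), $G_L(x,y)$ is $L$-harmonic in the \emph{second} variable $y$, and the Poisson kernel is the conormal derivative in the \emph{first} variable $x$. Your function $u_y(z)=G_L(z,y,\Omega)-G_L(z,y,\Omega_1)$, viewed as a function of $z$ (the first variable), is therefore \emph{not} $L$-harmonic; it is a solution of the formal adjoint equation $L^{*}u_y=0$. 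For $L\in\mathrm{L}^{+}(\lambda,\alpha,\Omega)$ in non-divergence form with only $C^\alpha$ coefficients, $L^{*}$ involves $\partial_i\partial_j a_{ij}$ and $\partial_i b_i$ and is not a classical elliptic operator with Hölder coefficients. Consequently both the interior maximum principle and, more seriously, the boundary Schauder estimate you invoke to convert the $L^\infty$ bound $\|u_y\|_\infty\lesssim d(y,\partial\Omega)$ into a conormal derivative bound do not apply to $u_y$ in the $z$-variable as stated. This is precisely the step you flag as ``the main technical point,'' and it is where the argument breaks for non-self-adjoint $L$.

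The paper sidesteps this issue. It uses the Poisson representation of $H(x,\cdot)=G_L(x,\cdot,\Omega)-G_L(x,\cdot,\Omega_1)$ on $\Omega_1$ \emph{in the $y$-variable}, where $L$-harmonicity is classical, to obtain the pointwise bound $H(x,y)\le K_3\,d(x,\partial\Omega)\,d(y,\partial\Omega_1)$ (the $d(x,\partial\Omega)$ factor comes from the Green function estimate $G_L(x,\xi,\Omega)\lesssim d(x,\partial\Omega)|x-\xi|^{1-n}$ applied on $\partial\Omega_1\setminus B_r$). Since $H(\cdot,y)$ vanishes on $\partial\Omega\cap B_r(O)$ and is nonnegative, dividing by $d(x,\partial\Omega)$ and sending $x\to\partial\Omega$ gives $\partial_{\nu(x)}H(x,y)\le K_4\,d(y,\partial\Omega_1)$, using only $C^1$ smoothness of $G_L$ in $x$ up to the boundary, which the paper cites from Hueber--Sieveking (Lemma 2.1). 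If you want to rescue your argument, the fix is to replace the maximum-principle-plus-Schauder step by this sharper pointwise bound on $H$ with the built-in $d(x,\partial\Omega)$ factor, after which the derivative bound needs only $C^1$ regularity in $x$ rather than an elliptic estimate for an adjoint operator that is not classically defined.
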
 
\begin{proof}[Proof of lemma \ref{simlemma}]
  Denote by $G_L(x,y,\Omega)$ the Green function in $\Omega$ for $L$ and by $G_L(x,y,\Omega_1)$ the Green function in $\Omega_1$ for $L$. Consider $H(x,y):= G_L(x,y,\Omega) - G_L(x,y,\Omega_1) $. For any $x \in \overline{\Omega}_1$ the function $H(x,y):= G_L(x,y,\Omega) - G_L(x,y,\Omega_1) $ is $L$-harmonic and continuous in $\overline{\Omega}_1$ as a function of $y$. Hence 
\begin{align} 
H(x,y)= \int\limits_{\partial \Omega_1} H(x,\xi) P_L(\xi, y,\Omega_1) dS(\xi)= \nonumber \\
= \int\limits_{\partial \Omega_1}\left(G_L(x,\xi,\Omega) - G_L(x,\xi,\Omega_1)\right)P_L(\xi, y,\Omega_1) dS(\xi)= \nonumber \\
=\int\limits_{\partial \Omega_1 \backslash B_r(O) }G_L(x,\xi,\Omega) P_L(\xi, y,\Omega_1) dS(\xi). \nonumber
\end{align}
 Let $x,y$ be in $ B_{r/2}(O)$, then by (i) from theorem \ref{G1} and by (\ref{pe}) we obtain
\begin{align}
 \int\limits_{\partial \Omega_1 \backslash B_r(O) }G_L(x,\xi,\Omega) P_L(\xi, y,\Omega_1) dS(\xi) \leq
 \int\limits_{\partial \Omega_1 \backslash B_r(O) } K_1 \frac{d(x,\Omega)}{|x-\xi|^{1-n}} P_L(\xi, y,\Omega_1) dS(\xi) \leq \nonumber 
\end{align}
\begin{align}
\leq  \int\limits_{\partial \Omega_1 \backslash B_r(O) } K_1 K_2 \frac{d(x,\partial\Omega) d(y,\partial \Omega_1)}{|x-\xi|^{1-2n}} dS(\xi) \leq \nonumber \\ \leq K_1K_2 \frac{2^{2n-1}}{r^{2n-1}} S(\partial \Omega_1)d(x,\partial\Omega) d(y,\partial \Omega_1) = K_3d(x,\partial\Omega) d(y,\partial \Omega_1) \nonumber
\end{align}
 for some positive constants $K_1,K_2,K_3$.
Thus $$H(x,y)\leq K_3 d(x,\partial\Omega) d(y,\partial \Omega_1).$$
Hence  for any $x\in \partial \Omega \cap B_{r/2}(O)$, $y \in \Omega_1\cap B_{r/2}(O)$ the following inequality holds
\begin{equation} \label{en1} \frac{\partial H(x,y)}{\partial\nu(x)}\leq K_4  d(y,\partial\Omega_1). \end{equation}
  According to (\ref{pe}) 
\begin{equation} \label{en2} P_L(x,y,\Omega_1)\geq \frac{1}{K_5}\frac{d(y,\partial\Omega_1)}{|x-y|^n}. \end{equation}
 Inequalities (\ref{en1}),(\ref{en2}) imply
 $$\frac{\partial H(x,y)}{\partial\nu(x)} = o(P_L(x,y,\Omega_1))$$
as $x,y\to O, y \in \Omega, x\in\partial \Omega$.
 Thus 
$$P_L(x,y,\Omega)=P_L(x,y,\Omega_1)+ \frac{\partial H(x,y)}{\partial\nu(x)}=(1+o(1))P_L(x,y,\Omega_1).$$
\end{proof}
\begin{remark}
 The statement of lemma \ref{simlemma} remains true if $L$ is the Laplace operator $\Delta$ and $\Omega$ is $\mathbb{R}^n_+$. The proof above still works using the explicit formula for $G_\Delta(x,y,\mathbb{R}^n_+)$. 
\end{remark}
\begin{proof}[Proof of theorem \ref{P3}.]   Proposition (\ref{P3ii}) follows from (\ref{P3i}) and \eqref{pe}, (\ref{P3iii}) follows from (\ref{P3ii}) and the inequality $\left \| P_L(\cdot,y)  \right \|_{L^1(\partial\Omega,dS)}\leq 1$. All that remains to prove is (\ref{P3i}). 

 We may assume $\partial\Omega$ is flat in a neighborhood of  $O$ using appropriate smooth change of  coordinates   $T:\mathbb{R}^n \to \mathbb{R}^n$ preserving Holder-continuity of the 
cofficients of $L$ and conditions (a)-(d) in definition \ref{L+} with a slight perturbation of the constants $\alpha$, $\lambda$ (see \cite{HHSM2} for a careful treatment of the transform of properly normalized Green function under a change of the coordinates). The Jacoby matrix of $T$ at $O$ has to be orthogonal to preserve $A(O)$ as the identity matrix.

 Consider a convex and $C^{\infty}$-smooth domain $Q\subset \Omega$ with a flat boundary part in common with $\partial\Omega$: $B_r(O)\cap Q = B_r(O)\cap \Omega$ for some $r>0$. Consider the sequence of operators $$L_k:= \sum\limits_{i,j=1}^{n}a_{ij}(\frac{1}{k}x)\frac{\partial^2}{\partial x_i \partial x_j} + \sum\limits_{i=1}^{n}\frac{1}{k}b_i(\frac{1}{k}x) \frac{\partial}{\partial x_i}+\frac{1}{k^2}c(\frac{1}{k}x) \hbox{, } k=1,2 \dots\infty.$$
We write $X\mathop{\sim}\limits^{1+\varepsilon}Y$ whenever $\frac{X}{1+\varepsilon}$ is asymptotically less than $Y$ and $Y$ is asymptotically less than $X(1+\varepsilon)$.
  Applying corollary \ref{P2} in $Q$ to this sequence we obtain for any $\varepsilon>0$ a  $k=:k(\varepsilon)$ such that   $P_{L_k}(x,y,Q)\mathop{\sim}\limits^{1+\varepsilon} P_{\triangle}(x,y, Q)$ as  $x,y\to O, y \in Q, x\in\partial Q$.  Hence $$P_{L}(x,y,\frac{1}{k}Q)\mathop{\sim}\limits^{1+\varepsilon} P_{\triangle}(x,y,\frac{1}{k}Q).$$

 According to lemma \ref{simlemma}  $P_{L}(x,y,\frac{1}{k}Q)\sim  P_{L}(x,y,\Omega)$ and $P_{\triangle}(x,y,\frac{1}{k}Q)\sim  P_{\triangle}(x,y,\mathbb{R}^n_{+})$. Applying the explicit formula $$P_{\triangle}(x,y,\mathbb{R}^n_{+})=\kappa_n\frac{ d(y,\partial\Omega)}{ d(x, y)^n} $$ we obtain $P_L(x,y)  \mathop{\sim}\limits^{1+\varepsilon} \kappa_n\frac{ d(y,\partial\Omega)}{ d(x, y)^n}  $. Letting $\varepsilon \to 0$ we finally get (\ref{P3i}).
\end{proof}

\section{Applications of the asymptotic formula for $L$-harmonic measure} \label{applications}

 Two theorems are stated below. The first one establishes the relation between the boundary behavior of positive solutions of two different elliptic equations with the same boundary measures. The second links nontangential limits in a halfspace and in a smooth domain. We will use these theorems later in sections \ref{nontangential limit}, \ref{tBMP}.
\begin{theorem} \label{sim1}
 Suppose that two functions $u$ and $\tilde u$ in a $C^{2,\alpha}$-smooth and bounded domain $\Omega$ enjoy the following properties:
\begin{enumerate} 
\item $u$ is positive and harmonic in $\Omega$.
\item  $\tilde u$ is  positive and $L$-harmonic in $\Omega$, where $L\in L^+(\lambda,\alpha,\Omega)$. 
\item  The boundary measure $\mu$ of $u$ (see section \ref{introduction}) coincides with the boundary measure of $\tilde u$, that is $\tilde u(\cdot)=\int\limits_{\partial\Omega}P_L(x,\cdot)d\mu(x)$ and $u(\cdot)=\int\limits_{\partial\Omega}P_\Delta(x,\cdot)d\mu(x)$.
\item  The point $O\in \partial \Omega$ and the matrix $A(O):=\big{(}a_{ij}(O) \big{)}_{i,j=1}^{n}$ is the identity matrix.
\end{enumerate}
 Then 
\begin{enumerate}[(i)]
\item For any $\varepsilon>0$ $\exists \delta>0$, such that $$u(y)(1-\varepsilon)-\varepsilon \leq\tilde u(y)\leq u(y)(1+\varepsilon)+\varepsilon$$ for any $y\in\Omega$: $d(y,O)<\delta$.
\item If  $u\geq 1$ in $\Omega$, then 
 $$\lim\limits_{y\to O; y\in\Omega} \frac{u(y)}{\tilde u(y)}=1. $$
\end{enumerate}

\end{theorem}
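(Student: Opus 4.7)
The plan is to reduce both $u$ and $\tilde u$ to a common leading term coming from the shared asymptotic form of the Poisson kernels. Since the Laplacian $\Delta$ lies in $\mathrm{L}^+(1,\alpha,\Omega)$ and its coefficient matrix at $O$ is the identity, Theorem \ref{P3}(ii) applies to $P_\Delta$ on the same footing as to $P_L$. I also record at the outset that $\mu$ is a finite Borel measure: evaluating the integral representation of $u$ at any interior point $y_0$ and using \eqref{pe}, $u(y_0)\geq c\,\mu(\partial\Omega)$, so $M:=\mu(\partial\Omega)<\infty$.

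Set $P^{*}(x,y):=\kappa_n\,d(y,\partial\Omega)/d(x,y)^n$ and $I(y):=\int_{\partial\Omega}P^{*}(x,y)\,d\mu(x)$. Applying Theorem \ref{P3}(ii) separately to $L$ and to $\Delta$, each Poisson kernel can be written as $P^{*}(x,y)(1+r_1(x,y))+r_2(x,y)$ with $\sup_{x\in\partial\Omega}|r_j(x,y)|\to 0$ as $y\to O$. Integrating against $\mu$ I obtain
$$\tilde u(y)=I(y)+\varepsilon_1(y),\qquad u(y)=I(y)+\varepsilon_2(y),$$
where $\max(|\varepsilon_1(y)|,|\varepsilon_2(y)|)\leq \eta(y)\bigl(I(y)+M\bigr)$ for some $\eta(y)\to 0$ as $y\to O$; both the multiplicative and additive $o(1)$ from Theorem \ref{P3}(ii) being $x$-uniform is exactly what allows this step.

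For (i), subtracting the two identities gives $\tilde u(y)-u(y)=\varepsilon_1(y)-\varepsilon_2(y)$, hence $|\tilde u(y)-u(y)|\leq 2\eta(y)(I(y)+M)$. Solving the second identity for $I(y)$ shows $I(y)\leq (u(y)+\eta(y)M)/(1-\eta(y))\leq 2u(y)+2\eta(y)M$ once $\eta(y)\leq 1/2$. Thus $|\tilde u(y)-u(y)|\leq 4\eta(y)\,u(y)+C\eta(y)M$; choosing $\delta$ so small that both $4\eta(y)\leq \varepsilon$ and $C\eta(y)M\leq \varepsilon$ for $d(y,O)<\delta$ yields $u(y)(1-\varepsilon)-\varepsilon\leq \tilde u(y)\leq u(y)(1+\varepsilon)+\varepsilon$. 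For (ii), the hypothesis $u\geq 1$ lets the additive term be absorbed multiplicatively: dividing through,
$$\Bigl|\tfrac{\tilde u(y)}{u(y)}-1\Bigr|\leq 4\eta(y)+C\eta(y)M\xrightarrow[y\to O]{}0.$$

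The main obstacle is to control the contribution from the boundary away from $O$. There, the leading quantity $P^{*}(x,y)$ is itself $O(d(y,\partial\Omega))$, so an error that is only small \emph{relatively} to $P^{*}$ would be useless. This is exactly the reason Theorem \ref{P3}(ii) is stated with \emph{two} $x$-uniform $o(1)$'s—one multiplying $P^{*}$ and one free—and why integrating the free piece against the finite measure $\mu$ produces the extra additive $\varepsilon$ that cannot be avoided in part (i). Once this decomposition is in hand, everything else is linear bookkeeping.
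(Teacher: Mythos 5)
Your argument is correct and follows essentially the same route as the paper: apply Theorem \ref{P3}(ii) to both $P_L$ and $P_\Delta$ (which is admissible since $\Delta$ has identity leading coefficients at $O$ and lies in $\mathrm{L}^+(\lambda,\alpha,\Omega)$ for $\lambda\geq n$, not $\lambda=1$ as you wrote, because of condition (c)), integrate against the finite measure $\mu$, and compare the two asymptotic identities. You make explicit the finiteness of $\mu$ and the absorption bookkeeping that the paper compresses into the $o(1)$ notation, but the decomposition and the way $u\geq 1$ is exploited in part (ii) are the same.
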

 We will prove theorem \ref{sim1} a few moments later.
\begin{theorem} \label{sim2}
 Assume the boundary of $C^{2,\alpha}$-smooth and bounded domain $\Omega$ contains the origin $O$, $\mathbb{R}^n_0$ being the tangent plane to $\partial \Omega$ at $O$ and operator $L\in L^+(\lambda,\alpha,\Omega)$. 
Let $\tilde u$ be a positive $L$-harmonic function in  $\Omega$ and $\tilde\mu$ be its boundary measure. Consider the orthogonal projection $Pr: \partial \Omega\cap B_\varepsilon(O)\to \mathbb{R}^n_0$, where $\varepsilon>0$ is sufficiently small. Define the finite Borel measure $\mu$ on $\mathbb{R}^n_0$ by $\mu(E)=\tilde \mu(Pr^{-1}(E))$. Let $u$ be the harmonic continuation of $\mu$ into the halfspace containing the inner normal vector to $\Omega$, i.e.
$$u(y)= \int\limits_{\mathbb{R}^n_0} \kappa_n \frac{d(y,\mathbb{R}^n_0)}{d(x,y)^n}d\mu(x), y \in \mathbb{R}^n_+.$$
 Then for any sequence $\{x_i\}_{i=1}^{+\infty}$ in $\Omega$ non-tangentionally tending to $O$  the following asymptotic equality holds:
$$\tilde u(x_i)= u(x_i)(1+o(1))+o(1).$$
\end{theorem}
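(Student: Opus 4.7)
The plan is to write both $\tilde u(y)$ and $u(y)$ as integrals against the same measure $\tilde \mu$, split each into a local part near $O$ and a remote part, and show that the remote parts are $o(1)$ while the local parts are asymptotically equal via Theorem \ref{P3}. After a preliminary linear change of coordinates (which replaces $L$ by a transformed operator still in $\mathrm{L}^+$ and leaves the pushforward construction meaningful) we may assume $A(O)$ is the identity so Theorem \ref{P3} applies. Using the pushforward definition of $\mu$,
\[
u(y) = \int_{\partial\Omega \cap B_\varepsilon(O)} \kappa_n \frac{d(y, \mathbb{R}^n_0)}{d(Pr(\xi), y)^n}\, d\tilde\mu(\xi),
\]
so both integrals are over $\tilde \mu$. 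Moreover $\tilde \mu$ is finite, since by Corollary \ref{P1} the kernel $P_L(\cdot, y_0)$ is bounded below by a positive constant on $\partial\Omega$ for any fixed interior $y_0$, giving $\tilde\mu(\partial\Omega) \le \tilde u(y_0)/c < \infty$.

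For a fixed small $\delta>0$, the contributions from $\partial\Omega \setminus B_\delta(O)$ are easy: by Corollary \ref{P1} and the explicit form of the halfspace kernel, both integrands are dominated by $C_\delta\, d(y, \partial\Omega)$ once $|y|$ is small, hence the remote parts are $O_\delta(d(y, \partial\Omega)) = o(1)$ as $y \to O$. On $\partial\Omega \cap B_\delta(O)$, Theorem \ref{P3}(ii) gives
\[
P_L(\xi, y) = \kappa_n \frac{d(y, \partial\Omega)}{d(\xi, y)^n}\bigl(1 + o_y(1)\bigr) + o_y(1),
\]
uniformly in $\xi$, so everything is reduced to a pointwise comparison of $d(y, \partial\Omega)/d(\xi, y)^n$ with $d(y, \mathbb{R}^n_0)/d(Pr(\xi), y)^n$.

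Here the geometry of the non-tangential approach enters. The $C^{2,\alpha}$-tangency of $\partial\Omega$ to $\mathbb{R}^n_0$ at $O$ gives $|\xi - Pr(\xi)| \le C|\xi|^2$, while non-tangentiality gives $d(y, \partial\Omega) \ge c|y|$ and consequently $d(y, \partial\Omega) = d(y, \mathbb{R}^n_0)(1 + o_y(1))$. To control the ratio $d(\xi, y)/d(Pr(\xi), y)$ uniformly in $\xi \in B_\delta(O) \cap \partial\Omega$, I would split into two cases: if $|\xi| \le 2|y|$ then $|\xi - Pr(\xi)| \le 4C|y|^2 = o(|y|)$, which is $o_y(d(\xi,y))$ by non-tangentiality; if $|\xi| > 2|y|$ then $|\xi - y| \ge |\xi|/2$ and the error is $O(|\xi|) = O(\delta)$. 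Combining these pointwise comparisons, integrating against $\tilde\mu$ over $\partial\Omega \cap B_\delta(O)$, and absorbing the remote-part $o(1)$ terms yields, for every $\eta > 0$ and every $y$ non-tangentially close enough to $O$,
\[
\tilde u(y) = \bigl(1 + O(\eta) + o_y(1)\bigr)\, u(y) + o_y(1).
\]
Letting $\eta \to 0$ gives the required $\tilde u(x_i) = u(x_i)(1+o(1)) + o(1)$. The main obstacle is the uniform geometric ratio estimate for $d(\xi, y)/d(Pr(\xi), y)$ across the whole local piece, which is exactly what forces the case split on $|\xi|$ versus $|y|$; the rest is assembly using Theorem \ref{P3}(ii), Corollary \ref{P1}, and finiteness of $\tilde\mu$.
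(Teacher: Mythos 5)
Your proposal is correct and follows essentially the same route as the paper: starting from the asymptotic expansion in Theorem~\ref{P3}(ii), localizing the integral near $O$ with the remote part absorbed into $o(1)$, and then comparing $d(\xi,y)$ with $d(Pr(\xi),y)$ (and $d(y,\partial\Omega)$ with $d(y,\mathbb{R}^n_0)$) using non-tangentiality and the tangency of $\partial\Omega$ to $\mathbb{R}^n_0$. The paper packages the distance comparison as Lemma~\ref{dsim} (needing only the $C^1$ estimate $|f(\xi)|=o(|\xi|)$), whereas you do an equivalent case split on $|\xi|\lessgtr 2|y|$ using the $C^2$ bound $|\xi-Pr(\xi)|\le C|\xi|^2$; this is a presentational difference, not a different method.
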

\begin{proof}[Proof of theorem \ref{sim1}]
 
 Applying (\ref{P3ii}) from theorem \ref{P3} we get
 \begin{align} \tilde u(y)= \int\limits_{\partial\Omega}P_L(x,y)d\mu(x)= \int \limits_{\partial\Omega} (\kappa_n \frac{d(y,\partial\Omega)}{d(x,y)^n}(1+o(1))+o(1)) d\mu(x)= \\
 \label{sim}= (1+o(1))\int \limits_{\partial\Omega} \kappa_n \frac{d(y,\partial\Omega)}{d(x,y)^n} d\mu(x) +o(1), y \to O. \end{align}
Analogously  $$u(y)= (1+o(1))\int \limits_{\partial\Omega} \kappa_n \frac{d(y,\partial\Omega)}{d(x,y)^n} d\mu(x) +o(1), y \to O.$$
 These two asymptotic equalities yield (i).

 If $u\geq 1$ in $\Omega$, then the last equality implies $\int \limits_{\partial\Omega} \kappa_n \frac{d(y,\partial\Omega)}{d(x,y)^n} d\mu(x)\gtrsim 1$. Hence $$ (1+o(1))\int \limits_{\partial\Omega} \kappa_n \frac{d(y,\partial\Omega)}{d(x,y)^n} d\mu(x) +o(1)=  (1+o(1))\int \limits_{\partial\Omega} \kappa_n \frac{d(y,\partial\Omega)}{d(x,y)^n} d\mu(x) .$$
 Thus $$ \tilde u(y) \sim \int \limits_{\partial\Omega} \kappa_n \frac{d(y,\partial\Omega)}{d(x,y)^n} d\mu(x) \sim u(y).$$

\end{proof}
 First, we prove the following lemma:
\begin{lemma} \label{dsim}
 For $z$ in $\Omega$ non-tangentionally  tending to $O$ and $s$ in $ \partial \Omega$ tending to $O$  $$d(z,s)\sim d(z,Pr(s)).$$
\end{lemma}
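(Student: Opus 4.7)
The plan is to reduce $d(z,s) = |z-s|$ and $d(z,Pr(s)) = |z-Pr(s)|$ via the triangle inequality, and then show that the error $|s - Pr(s)|$ is negligible compared to $|z - Pr(s)|$.

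First, since $\partial\Omega$ is $C^{2,\alpha}$-smooth at $O$ and $\mathbb{R}^n_0$ is its tangent plane there, the boundary has first-order contact with $\mathbb{R}^n_0$. Consequently, for $s \in \partial\Omega$ close to $O$,
\[
|s - Pr(s)| \leq C|s|^2
\]
for some constant $C$ depending only on the $C^2$-norm of $\partial\Omega$ near $O$. The triangle inequality then yields
\[
\bigl| |z-s| - |z - Pr(s)| \bigr| \leq |s - Pr(s)| \leq C|s|^2,
\]
so it suffices to prove that $|s - Pr(s)| = o(|z - Pr(s)|)$ as $z \to O$ non-tangentially and $s \to O$.

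Next, I would exploit the non-tangential hypothesis in the form $d(z,\mathbb{R}^n_0) \geq c|z|$ (which, for a $C^2$ domain, is equivalent to the more common $d(z,\partial\Omega) \geq c|z|$ up to slightly shrinking the opening of the cone, since $\partial\Omega$ and $\mathbb{R}^n_0$ agree to second order at $O$). Since $Pr(s) \in \mathbb{R}^n_0$, one trivially has $|z - Pr(s)| \geq d(z,\mathbb{R}^n_0) \geq c|z|$. I would then split into two cases:
\begin{enumerate}[(A)]
\item If $|s| \leq 2|z|$, then $|z - Pr(s)| \geq c|z|$ gives
\[
\frac{|s - Pr(s)|}{|z - Pr(s)|} \leq \frac{C|s|^2}{c|z|} \leq \frac{4C|z|}{c} \xrightarrow[z \to O]{} 0.
\]
\item If $|s| > 2|z|$, then since $|Pr(s)| \geq |s| - C|s|^2 \geq |s|/2$ for $|s|$ small, we get
\[
|z - Pr(s)| \geq |Pr(s)| - |z| \geq \frac{|s|}{2} - \frac{|s|}{2} \cdot \tfrac{1}{2}... \geq \frac{|s|}{4},
\]
and therefore $|s - Pr(s)|/|z - Pr(s)| \leq 4C|s| \to 0$ as $s \to O$.
\end{enumerate}

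In both cases the error tends to $0$, so $|z - s| = |z - Pr(s)|(1 + o(1))$, i.e.\ $d(z,s) \sim d(z, Pr(s))$. The only mildly delicate point is the passage from the non-tangential condition stated with respect to $\partial\Omega$ to the analogous condition with respect to $\mathbb{R}^n_0$, but for $C^{2}$-smooth boundaries this is automatic because $\partial\Omega$ and its tangent plane at $O$ deviate only by $O(|s|^2)$; everything else is the case-analysis above together with the two elementary inequalities $|z - Pr(s)| \geq d(z, \mathbb{R}^n_0)$ and $|Pr(s)| \geq |s|/2$ for small $|s|$.
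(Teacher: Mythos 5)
Your reduction is the same as the paper's: both proofs observe that, by the triangle inequality, it suffices to show $d(s,Pr(s)) = o(d(z,Pr(s)))$, and both get the upper bound on $d(s,Pr(s))$ from boundary smoothness and the lower bound on $d(z,Pr(s))$ from the non-tangential condition. The mechanism is a little different, though. The paper works in the coordinates $s=(\xi,f(\xi))$, $z=(\eta,\tau)$ and only uses $C^1$-smoothness, so $d(s,Pr(s))=|f(\xi)|=o(|\xi|)$; it then chains $|\xi|\lesssim\max(|\xi-\eta|,|\eta|)\lesssim(\tfrac{|\eta|}{|\tau|}+1)\max(|\xi-\eta|,|\tau|)\lesssim d(z,Pr(s))$, with the non-tangential bound $|\eta|/|\tau|=O(1)$ absorbing the constant, and no case split is needed. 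You instead use the stronger $C^2$ bound $|s-Pr(s)|\le C|s|^2$ and a case analysis on $|s|$ versus $|z|$. Both are valid; the paper's version is slightly more economical on regularity (it would work for $C^1$ boundaries) and avoids cases, whereas yours is more elementary and geometric.

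One arithmetic slip in Case B: from $|Pr(s)|\ge|s|/2$ and $|z|<|s|/2$ you can only conclude $|z-Pr(s)|\ge|Pr(s)|-|z|>0$, not $\ge|s|/4$ — in the borderline regime $|s|$ just above $2|z|$ the two terms cancel. The fix is to take a sharper constant in the projection bound, e.g.\ $|Pr(s)|\ge|s|-C|s|^2\ge\tfrac34|s|$ for $|s|$ small (say $|s|\le 1/(4C)$), which then gives $|z-Pr(s)|\ge\tfrac34|s|-\tfrac12|s|=\tfrac14|s|$ and the conclusion $|s-Pr(s)|/|z-Pr(s)|\le 4C|s|\to 0$ as intended. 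With that repair the argument is complete.
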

\begin{proof}[Proof of lemma \ref{dsim}]
 To prove lemma \ref{dsim} it is enough to show that $d(s,Pr(s))=o(d(z,Pr(s)))$. Let $f:\mathbb{R}^{n-1}\cap B_\varepsilon(O) \to R$ be the local parameterization of $\partial \Omega$ near $O$, so that $s=(\xi,f(\xi))$ for a $\xi \in \mathbb{R}^{n-1}$. Since $\partial \Omega$ is $C^1$-smooth, \begin{equation} 
\label{1} d(s,Pr(s))=|f(\xi)|=o(|\xi|).
\end{equation}  
 Represent $z$ as $(\eta, \tau)$, where $\eta  \in \mathbb{R}^{n-1}$ and $\tau \in \mathbb{R}$. Since $z$ tends to $O$ non-tangentionally then 
\begin{equation} \frac{|\eta|}{|\tau|}=O(1).
\end{equation}
Note that
\begin{align}
\label{2} o(|\xi|)=o(max(|\xi-\eta|,|\eta|))=o\left((\frac{|\eta|}{|\tau|}+1)max(|\xi-\eta|,|\tau|)\right)=\\
\label{3}=O(1)\cdot o(max(|\xi-\eta|,|\tau|))= o(d(z,Pr(s))).
\end{align}
 Combining formulas (\ref{1}), (\ref{2}), (\ref{3}) we obtain $d(s,Pr(s))=o(d(z,Pr(s)))$.
\end{proof}
 
\begin{proof}[Proof of theorem \ref{sim2}]
According to  (\ref{sim})
$$ \tilde u(y)=(1+o(1))\int \limits_{\partial\Omega} \kappa_n \frac{d(y,\partial\Omega)}{d(x,y)^n} d\tilde\mu(x) +o(1)=$$
for any $r>0$
$$=(1+o(1))\int \limits_{\partial\Omega\cap B_r(O)} \kappa_n \frac{d(y,\partial\Omega)}{d(x,y)^n} d\tilde\mu(x) +o(1) =$$
for sufficiently small $r$
$$= \left((1+o(1))\int \limits_{\partial\Omega\cap B_r(O)} \kappa_n \frac{d(y,\partial\Omega)}{d(x,y)^n} d\mu(Pr(x)) +o(1)\right)\mathop{\sim}\limits^{1+\delta}$$
according to lemma \ref{sim} for any $\delta>0$ $\exists r>0$:
$$\mathop{\sim}\limits^{1+\delta}\left((1+o(1))\int \limits_{\partial\Omega\cap B_r(O)} \kappa_n \frac{d(y,\partial\Omega)}{d(Pr(x),y)^n} d\mu(Pr(x)) +o(1)\right)=$$
$$=u(y)(1+o(1))+o(1). $$
 Thus for any $\delta>0$ we have
$\tilde u(y) \mathop{\sim}\limits^{1+\delta}\left( u(y)(1+o(1))+o(1)\right)$. 

\end{proof}
\subsection{Criterion of existence of non-tangentional limit}
\label{nontangential limit}
  We are going to formulate the theorem by W.Ramey and D.Ullrich. This theorem provides a criterion of existence of a non-tangentional limit at a fixed boundary point $P$ of the half-space for a positive harmonic function in terms of smoothness of its boundary measure  at P.  We need the notion of the strong derivative of measure \cite{WRDU}:
\begin{definition} \label{D1}
 Let $\Omega$ be a $C^1$-smooth domain and $\mu $ be a locally finite Borel measure on $\partial \Omega$. A sequence of balls $\{B_{r_i}(x_i)\}_{i=1}^{\infty}$ is called regular (with respect to $O\in \partial \Omega$) if the following properties hold:
\begin{enumerate}
\item $x_i \in \partial \Omega$ for any $i\in \mathbb{N}$ and $d(x_i,O)\mathop{\to}\limits_{i\to \infty}0$ .
\item $\exists K>0$: $\frac{1}{K} d(x_i,O)\leq r_i\leq Kd(x_i,O)$ for any $i\in \mathbb{N}$.
\end{enumerate}

 It is said that $\mu$ has a strong derivative at $O\in \Omega$ if for any  regular  sequence of balls $\{B_{r_i}(x_i)\}_{i=1}^{\infty}$ (with respect to $O$) there exist a finite limit
$$\lim\limits_{i\to \infty}\frac{\mu(B_{r_i}(x_i))}{S(B_{r_i}(x_i))},$$
where $S$ is the surface Lebesgue measure on $\partial \Omega$.
This limit is denoted by $D\mu(O)$.
\end{definition}
 If $\Omega$ coincides with $\mathbb{R}^n_+$ there is an equivalent definition in terms of weak convergence.
\begin{definition}
 Let $\mu$ be a locally finite measure on $\partial\mathbb{R}^n_+$. Define the family of measures $\{\mu_r\}_{r>0}$ by $$\mu_r(E)=\mu(rE)r^{-n+1}.$$ 
If there exist a number $A\geq0$, such that the family $\{\mu_r\}_{r>0}$ converge weakly to $A\cdot S$ as $r\to 0$, then $D\mu(O):=A$ is called the strong derivative of $\mu$ at origin $O$.

\end{definition}
 The following easy remarks are left without proof. Their sense  can be interpreted as follows: the property of a measure to have a derivative is stable under smooth transformations. 
\begin{remark}
Let $\Omega_1$ and $\Omega_2$ be $C^1$-smooth domains in $\mathbb{R}^n$. Suppose a map $T:\partial\Omega_1 \to \partial\Omega_2  $ is a $C^1$-smooth diffeomorphism. If a Borel measure $\mu$ on $\partial \Omega$ has a strong derivative at $x\in \partial\Omega_1$, then the measure $\tilde \mu$  defined by 
$$\mu (E)=\tilde \mu (T(E)).$$
on $\Omega_2$ also has a strong derivative at $T(x)$.
\end{remark}
\begin{remark} \label{pr}
 Let $\Omega$ be $C^1$-smooth domain and $\mu $ be a locally finite Borel measure on $\partial \Omega$. Suppose the hyperplane $\mathbb{R}^n_0$ is the tangent plane to $\partial \Omega$ at $O$. Consider $B_\varepsilon(O)$ where $\varepsilon$ is sufficiently small so that the orthogonal projection $Pr: \partial \Omega\cap B_\varepsilon(O) \to \mathbb{R}^n_0$ is injective.
Consider a finite Borel measure $\tilde \mu$ on $\mathbb{R}^n_0$ defined by 
$$\tilde\mu(E)=\mu(Pr^{-1}E).$$
Then $\mu$ has a strong derivative at $O$ and $D\mu(O)=A$ if and only if  $\tilde \mu$ has a strong derivative at $O$ and $D\tilde\mu(O)=A$.
\end{remark}
The theorem below is due to W.Ramey and D.Ullrich. It provides a criterion of existence of a non-tangentional limit at  a given boundary point $P$ of a half-space for a positive harmonic function in terms of smoothness of  its boundary measure at $P$.
\begin{theorem}[{\cite{WRDU}}] \label{NT1}
 Suppose $u$ is a positive harmonic function in $\mathbb{R}^n_+$ and $\mu$ is its boundary measure, $A \in [0,\infty)$. Then $u$ has a non-tangentional limit $A$ at $O\in\partial\Omega$ if and only if $\mu$ has a strong derivative at $O$ and $D\mu(O)=A$.
\end{theorem}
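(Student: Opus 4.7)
The natural strategy exploits the dilation invariance of $\mathbb{R}^n_+$. For $r>0$ define the rescalings $u_r(x,t):=u(rx,rt)$ and $\mu_r(E):=r^{-(n-1)}\mu(rE)$; since the Poisson kernel $K$ is homogeneous of degree $-(n-1)$, a direct change of variables shows $u_r$ is the Poisson integral of $\mu_r$. The plan is to recast both sides of the biconditional as statements about the families $\{u_r\}$ and $\{\mu_r\}$ and then reduce to a single equivalence between these two convergences.

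The first reformulation is that $u$ admits a non-tangential limit $A$ at $O$ if and only if $u_r(z)\to A$ as $r\to 0$ for every $z\in\mathbb{R}^n_+$, which by Harnack's inequality applied to the positive harmonic family $\{u_r\}$ is in turn equivalent to $u_r\to A$ locally uniformly on compact subsets of $\mathbb{R}^n_+$. The second reformulation, namely that $D\mu(O)=A$ is equivalent to vague convergence $\mu_r\to A\,dS$ on $\partial\mathbb{R}^n_+$, is essentially the equivalent definition of the strong derivative given for the half-space in terms of weak convergence of dilations. The heart of the proof is therefore the equivalence
\[
u_r\to A\ \text{on compacta of}\ \mathbb{R}^n_+ \quad\Longleftrightarrow\quad \mu_r\to A\,dS\ \text{vaguely.}
\]

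For the implication $(\mu_r\to A\,dS)\Rightarrow(u_r\to A)$ it suffices, by the Harnack reduction, to show $u_r(0,1)\to A$; that is,
\[
\int\frac{c\,d\mu_r(\eta)}{(|\eta|^2+1)^{n/2}}\longrightarrow A\int\frac{c\,d\eta}{(|\eta|^2+1)^{n/2}}=A \quad\text{as }r\to 0.
\]
Splitting the integral at $|\eta|=R$, vague convergence handles the bounded piece, and the tail must be shown to be $O(1/R)$ uniformly in small $r$. Unscaled, the tail equals $r\int_{|\xi|>rR}(|\xi|^2+r^2)^{-n/2}\,d\mu(\xi)$, and a dyadic annular decomposition combined with the bound $\mu(B_s(O))\lesssim s^{n-1}$ for small $s$ (a consequence of $D\mu(O)=A$) controls the annuli $rR<|\xi|<1$, while the global integrability condition \eqref{mu} handles $|\xi|\ge 1$. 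For the converse, the identity $\int\phi(x)u_r(x,t)\,dx=\int(\phi*K_{t})(\xi)\,d\mu_r(\xi)$ valid for $\phi\in C_c(\mathbb{R}^{n-1})$ permits taking $t\to 0$ at fixed $r$ (which recovers $\int\phi\,d\mu_r$) and $r\to 0$ at fixed $t$ (which recovers $A\int\phi\,dx$ via local uniform convergence of $u_r$). Exchanging these two limits, justified by the same tail estimate applied to the error $\phi-\phi*K_{t}$, yields $\mu_r\to A\,dS$ vaguely.

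The main obstacle is exactly this tail estimate. Vague convergence of $\mu_r$ alone is insufficient because the Poisson kernel evaluated at an interior point is not compactly supported on the boundary, and nothing prevents $\mu_r(\mathbb{R}^{n-1})$ from diverging as $r\to 0$. What saves the argument is that $D\mu(O)=A$ is much stronger than prescribing a single-point density: it supplies the uniform estimate $\mu(B_s(O))=O(s^{n-1})$ on all small balls at $O$, and this, together with the global condition \eqref{mu}, produces precisely the decay needed to dominate the $|\eta|^{-n}$ tail of the rescaled Poisson kernel.
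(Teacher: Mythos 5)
The paper does not prove this theorem---it cites Ramey--Ullrich---but the introduction describes exactly the strategy you employ: exploit homothety invariance of $\mathbb{R}^n_+$, recast the strong derivative as weak convergence of the rescaled measures $\mu_r$, and recast the non-tangential limit as locally uniform convergence of the dilations $u_r$. Your reconstruction, including the identification of the $\mu(B_s(O))=O(s^{n-1})$ tail bound (available from either hypothesis: from vague convergence of $\mu_r$ in one direction, and from boundedness of $u(0,s)$ together with the lower Poisson-kernel estimate in the other) as the key technical input that makes the two limit exchanges legitimate, is correct and matches the cited approach.
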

 It appears that the theorem above can be generalized as follows.
 \begin{theorem} \label{NT2}
 Let $\Omega$ be a $C^{2,\varepsilon}$-smooth bounded domain in $\mathbb{R}^n$, $n\geq 3$, $O \in \partial \Omega$, $A \in [0,\infty)$. Suppose $L\in L^+(\lambda,\alpha, \Omega)$, $u$ is a positive $L$-harmonic function in $\Omega$ and $\mu$ is its boundary measure. Then $u$ has a non-tangentional limit $A$ at $O$ if and only if its boundary measure has a strong derivative at $O$ and $D\mu(O)=A$.
\end{theorem}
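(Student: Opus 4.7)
The strategy is to reduce Theorem \ref{NT2} on $(\Omega, L)$ to the Ramey--Ullrich theorem \ref{NT1} on $(\mathbb{R}^n_+, \Delta)$, using Theorem \ref{sim2} as the bridge. First I would normalize: by an invertible linear change of coordinates centered at $O$ (orthogonal diagonalization followed by rescaling of axes), I may assume that the coefficient matrix $A(O):=\bigl(a_{ij}(O)\bigr)_{i,j=1}^n$ is the identity, which is the standing hypothesis of the machinery in Section \ref{asymptotic formula} and Section \ref{applications}. This change preserves $C^{2,\alpha}$-smoothness of $\partial\Omega$, keeps $L$ in a (possibly rescaled) class $L^+(\lambda',\alpha,\Omega')$, and sends non-tangential cones to non-tangential cones. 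By the first remark after Definition \ref{D1}, the existence of a strong derivative of the boundary measure at $O$ persists under the diffeomorphism; the Jacobian factor relating the two values of the derivative can be tracked explicitly, and because the same factor appears in the comparison of $L$- and $\Delta$-Poisson kernels, the constant $A$ in the theorem is preserved.

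Next, as in Theorem \ref{sim2}, let $\mathbb{R}^n_0$ be the tangent plane to $\partial\Omega$ at $O$, let $\mathrm{Pr}$ be the orthogonal projection of a small neighborhood of $O$ in $\partial\Omega$ onto $\mathbb{R}^n_0$, and let $\mu'$ be the finite Borel measure on $\mathbb{R}^n_0$ given by $\mu'(E):=\mu(\mathrm{Pr}^{-1}(E))$. By Remark \ref{pr}, $\mu$ has strong derivative $A$ at $O$ if and only if $\mu'$ does. Let $u$ be the Poisson extension of $\mu'$ into the half-space $\mathbb{R}^n_+$ whose inner normal at $O$ agrees with that of $\Omega$; the contribution of $\mu$ supported away from $O$ produces an $L$-harmonic function continuous at $O$ which affects neither the existence nor (after adjusting $A$ by its value at $O$) the value of a non-tangential limit.

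Now by Theorem \ref{NT1}, $u$ has non-tangential limit $A$ at $O$ in $\mathbb{R}^n_+$ iff $\mu'$ has strong derivative $A$ at $O$, and by Theorem \ref{sim2}, for every sequence $\{x_i\}\subset\Omega$ tending non-tangentially to $O$,
\begin{equation*}
\tilde u(x_i) = u(x_i)(1 + o(1)) + o(1).
\end{equation*}
Since $1 + o(1)$ is eventually bounded away from $0$, solving this identity for $u(x_i)$ gives $u(x_i) = (\tilde u(x_i) - o(1))/(1 + o(1))$, so $\tilde u(x_i)\to A$ iff $u(x_i)\to A$. Because $\partial\Omega$ is $C^1$-tangent to $\mathbb{R}^n_0$ at $O$, non-tangential sequences in $\Omega$ and in $\mathbb{R}^n_+$ coincide modulo an adjustment of aperture. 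Chaining these equivalences delivers Theorem \ref{NT2}.

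The main obstacle is organizational rather than conceptual: the heavy analytic content is already contained in Section \ref{asymptotic formula} (the asymptotics of $L$-harmonic measure) and Theorem \ref{NT1}. What remains is to verify that the constant $A$ is preserved through the linear normalization and the projection to the tangent plane, and that non-tangential convergence in $\Omega$ near $O$ is equivalent to non-tangential convergence in $\mathbb{R}^n_+$. The only genuinely delicate point is that Theorem \ref{sim2} has a superficially one-sided look, but its algebraic form $\tilde u = u(1+o(1)) + o(1)$ is symmetric enough to transfer convergence in both directions.
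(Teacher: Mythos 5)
Your proof is correct and reproduces the paper's own argument: normalize so that $A(O)$ is the identity, push $\mu$ forward to the tangent plane via the orthogonal projection and use Remark \ref{pr} for the strong-derivative equivalence, apply Theorem \ref{NT1} in the half-space, and transfer back through the asymptotic identity $\tilde u = u(1+o(1)) + o(1)$ of Theorem \ref{sim2}. The worry you flag about whether $A$ survives the linear normalization is a sound instinct but standard: the Jacobian of the transform affects the Poisson kernel and the surface measure by the same factor, so both the non-tangential limit and the strong derivative at $O$ are preserved with the same constant.
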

\begin{proof}
 We will  deduce theorem \ref{NT2} from theorems \ref{NT1},\ref{sim2}.
 Without loss of generality we may assume that $O$ is the origin, the inner normals to $\mathbb{R}^n_+$ and $\partial \Omega$ at $O$ coincide  and the matrix $A(O)$ of the leading coefficients of $L$ at $O$ is the identity matrix. We can always get this  by a linear transfrom of the coordinates and a shift. 
Next, we choose an $\varepsilon>0$ so that the orthogonal projection $Pr: \partial \Omega\cap B_\varepsilon(O) \to \mathbb{R}^n_0$ is injective and define the Borel measure $\tilde\mu$ on $\mathbb{R}^n_0$ by 
$$\tilde\mu(E)= \mu(Pr^{-1}(E)).$$ 
Define $\tilde u$ as the harmonic continuation of $\tilde \mu$ into the halfspace $\mathbb{R}^n_+$.

 We are going to show that the following properties are equivalent:
\begin{enumerate}
\item $\mu$ has a strong derivative at $O$ and $D\mu(O)=A$,
\item $\tilde\mu$ has a strong derivative at $O$ and $D\tilde\mu(O)=A$,
\item $\tilde u$ has non-tangentional limit $A$ at $O$,
\item $u$ has non-tangentional limit $A$ at $O$.
\end{enumerate}
 Remark \ref{pr} implies 1 $\Leftrightarrow$ 2, and  2 $\Leftrightarrow$ 3 by theorem \ref{NT1}. 
 Theorem \ref{sim2} says that 
 $$u(y)=(1+o(1))\tilde u(y)+o(1) $$
 as $y$ tends non-tangentially to $O$.
 Hence 3 $\Leftrightarrow$ 4, and 1$\Leftrightarrow$4. 

\end{proof}
 The next theorem is due to Lynn Loomis ($n=2$) and Walter Rudin ($n\geq 2$). This theorem is very similar to theorem \ref{NT1} and provides a criterion of existence of a limit along the normal at a boundary point for a positive harmonic function. To formulate this theorem  we need the notion of the symmetric derivative of a measure.
\begin{definition}
  Suppose that a measure $\mu$ is concentrated on the boundary of a $C^{1}$-smooth domain $\Omega$. Let $S$ be the surface Lebesgue measure on $\partial \Omega$. We say that $\mu$ has a symmetric derivative $A$ at $O\in \partial \Omega$ if $\lim\limits_{r\to+0} \frac{\mu(B_r(O))}{S(B_r(O))}=A$ $(=: D_{sym}\mu(O))$.
\end{definition} 
\begin{theorem}[{\cite{LL}, \cite{WR}}] \label{NL1}
Suppose $u$ is a positive harmonic function in $\mathbb{R}^n_+$ and $\mu$ is its boundary measure. Then $u$ has a finite limit $A$ along the normal at $O\in\partial\Omega$ and this limit is $A$ if and only if $D_{sym}\mu(O)=A$.
\end{theorem}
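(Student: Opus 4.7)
The plan is to recognize Theorem \ref{NL1} as nothing but the $\alpha=0$ specialization of Theorem \ref{criterium}, so the proof reduces to a short dictionary translation between the two formulations plus a constant check. First I would observe that the hypotheses of Theorem \ref{NL1} place us directly in the setting \eqref{umu}--\eqref{mu} of the Introduction. The term $Ct$ in the representation \eqref{umu} has limit $0$ along the normal as $t\to+0$ and contributes nothing to the symmetric derivative of $\mu$, so without loss of generality $C=0$ and $u=u_\mu$.

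Next I would rewrite the symmetric-derivative condition so that it matches (ii) of Theorem \ref{criterium}. Since $S$ is the $(n-1)$-dimensional Lebesgue measure on $\partial\mathbb{R}^n_+\simeq\mathbb{R}^{n-1}$, we have $S(B_r(O))=\omega_{n-1}r^{n-1}$ where $\omega_{n-1}=\pi^{(n-1)/2}/\Gamma(\tfrac{n+1}{2})$. Thus $D_{sym}\mu(O)=A$ is equivalent to
$$\frac{\mu(B_r(O))}{r^{n-1}}\xrightarrow[r\to+0]{} A\omega_{n-1}=A\,\frac{\pi^{(n-1)/2}}{\Gamma(\tfrac{n+1}{2})},$$
which is precisely condition (ii) of Theorem \ref{criterium} at $\alpha=0$, with the value $b:=A\omega_{n-1}$.

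Applying Theorem \ref{criterium} with $\alpha=0$ gives that the above is equivalent to $u(0,t)\to a$ where
$$a\;=\;b\cdot\frac{\Gamma(\tfrac{n+1}{2})\Gamma(\tfrac{1}{2})}{\pi^{n/2}}\;=\;A\,\frac{\pi^{(n-1)/2}}{\Gamma(\tfrac{n+1}{2})}\cdot\frac{\Gamma(\tfrac{n+1}{2})\sqrt{\pi}}{\pi^{n/2}}\;=\;A,$$
the last equality being the whole point of the normalization $\Gamma(\tfrac12)=\sqrt\pi$. This is exactly the conclusion of Theorem \ref{NL1}.

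There is no real obstacle at this stage: the analytic content (the implications between pointwise normal behavior of $u$ and density behavior of $\mu$) has already been carried out in Theorem \ref{criterium} via the Wiener Tauberian theorem applied to the kernel $k(t)=nct/(1+t^2)^{n/2+1}$. The only step requiring any care is the bookkeeping of constants; this works out exactly, so no historically distinct argument is needed --- the Loomis/Rudin theorem is the $\alpha=0$ slice of the one-parameter family already established. If one wished to prove Theorem \ref{NL1} directly without invoking the full Theorem \ref{criterium}, the same Wiener Tauberian argument of section \ref{GT} specializes verbatim: the Fourier transform of $k$ at $\alpha=0$ is $\Gamma(\tfrac{n+iy+1}{2})\Gamma(\tfrac{-iy+1}{2})/\pi^{n/2}$, which is zero-free on $\mathbb{R}$, and the monotonicity squeeze with $g_{\pm\varepsilon}$ converts the resulting weak convergence into genuine convergence of $\mu(B_r)/r^{n-1}$.
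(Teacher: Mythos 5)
Your proposal is correct, and it matches the paper's own viewpoint exactly: the paper does not reprove this theorem (it cites \cite{LL}, \cite{WR}), but the introduction explicitly identifies the $\alpha=0$ case of Theorem~\ref{criterium} with the Loomis--Rudin result, and section~\ref{GT} proves Theorem~\ref{criterium} by the same Wiener Tauberian argument that Rudin used. Your dictionary (with $\omega_{n-1}=\pi^{(n-1)/2}/\Gamma(\tfrac{n+1}{2})$, $b=A\omega_{n-1}$, and $\Gamma(\tfrac12)=\sqrt\pi$ making the constant collapse to $a=A$) is the complete reduction, so there is nothing more to add.
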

 The next theorem extends theorems \ref{criterium} and \ref{NL1} to some class of elliptic operators and to sufficiently smooth domains.
\begin{theorem} \label{NL2}
Let $\Omega$ be a $C^{2,\varepsilon}$-smooth and bounded domain in $\mathbb{R}^n$, $n\geq 3$,  let $n(x)$ denote the unit normal (interior) vector at the point $x\in \partial \Omega$. Suppose $L\in L^+(\lambda,\alpha, \Omega)$, the matrix of the leading coefficients of $L$ at $O$ is the identity matrix, $\kappa \in (-1,n-1]$, and $A \in [0,+\infty)$. Let $u$ be a positive $L$-harmonic function in $\Omega$ and let $\mu$ be its boundary measure. Then  $u(x+n(x)t)t^{\kappa} \to A$ as $t \to +0$ if and only if $\frac{\mu({B_r(x)})}{r^{n-1}} r^{\kappa} \to C_\kappa A$ as $r\to+0$, where ${C_\kappa= \frac{\pi^{n/2}}{\Gamma(\frac{n-\kappa+1}{2})\Gamma(\frac{\kappa+1}{2})}}$.
\end{theorem}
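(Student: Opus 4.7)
My plan is to reduce Theorem~\ref{NL2} to Theorem~\ref{criterium} in the half-space by means of Theorem~\ref{sim2}, exactly in the spirit of the proof of Theorem~\ref{NT2}. After a linear change of coordinates and a shift, I assume $O = 0$, the tangent plane to $\partial\Omega$ at $O$ is $\mathbb{R}^n_0 = \{x_n = 0\}$, the inner normal $n(O)$ points into $\mathbb{R}^n_+$, and $A(O) = I$. Let $Pr:\partial\Omega \cap B_\varepsilon(O) \to \mathbb{R}^n_0$ be the orthogonal projection (injective for small $\varepsilon$), define the finite Borel measure $\nu$ on $\mathbb{R}^n_0$ by $\nu(E) := \mu(Pr^{-1}(E))$, and let $U$ be the Poisson extension of $\nu$ into $\mathbb{R}^n_+$. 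I then want to link the four conditions
\begin{enumerate}
\item[(1)] $\frac{\mu(B_r(O))}{r^{n-1}}\, r^{\kappa} \to C_\kappa A$ as $r \to 0^+$,
\item[(2)] $\frac{\nu(B_r(O) \cap \mathbb{R}^n_0)}{r^{n-1}}\, r^{\kappa} \to C_\kappa A$ as $r \to 0^+$,
\item[(3)] $U(tn(O))\, t^{\kappa} \to A$ as $t \to 0^+$,
\item[(4)] $u(tn(O))\, t^{\kappa} \to A$ as $t \to 0^+$,
\end{enumerate}
and show that $(1)\Leftrightarrow(2)\Leftrightarrow(3)\Leftrightarrow(4)$.

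The equivalence $(1)\Leftrightarrow(2)$ is purely geometric and uses the $C^{2,\varepsilon}$-smoothness of $\partial\Omega$: locally $\partial\Omega$ is a graph $s = (\xi, f(\xi))$ with $f(0) = 0$ and $\nabla f(0) = 0$, so $f(\xi) = O(|\xi|^2)$, which yields
$$B_r(O) \cap \partial\Omega \;\subseteq\; Pr^{-1}(B_r(O) \cap \mathbb{R}^n_0) \;\subseteq\; B_{r(1+Cr^2)}(O) \cap \partial\Omega,$$
hence $\mu(B_r) \leq \nu(B_r) \leq \mu(B_{r(1+Cr^2)})$; since $(1+Cr^2)^{n-1-\kappa} \to 1$, both outer limits collapse to the same value. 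The equivalence $(2)\Leftrightarrow(3)$ is Theorem~\ref{criterium} applied to $U$ and $\nu$ in $\mathbb{R}^n_+$ for $\kappa \in (-1, n-1)$, and is handled by the Beurling minimal principle reviewed in Section~\ref{tBMP} when $\kappa = n-1$. The new step is $(3)\Leftrightarrow(4)$, and here Theorem~\ref{sim2} does the work: the straight-normal approach $y = tn(O)$ is in particular non-tangential, so $u(tn(O)) = U(tn(O))(1+o(1)) + o(1)$ as $t \to 0^+$; multiplying by $t^{\kappa}$ with $\kappa \geq 0$ kills the additive $o(t^{\kappa})$ error, and the two normal limits are equivalent.

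The main obstacle is the range of $\kappa$. The chain of equivalences runs cleanly for $\kappa \in [0, n-1)$; the endpoint $\kappa = n-1$ is handled by Beurling's principle in step $(2)\Leftrightarrow(3)$; but for $\kappa \in (-1, 0)$ the additive $o(t^{\kappa})$ error coming from Theorem~\ref{sim2} blows up as $t \to 0^+$ and can swamp the leading term $U(tn(O))\, t^{\kappa}$, so step $(3)\Leftrightarrow(4)$ breaks down. This is precisely the case the introduction flags as untreated in dimension $n \geq 3$; removing it would require sharpening Theorem~\ref{sim2} to a higher-order Green-function expansion. I therefore expect to prove Theorem~\ref{NL2} only for $\kappa \in [0, n-1]$, matching the scope announced in Section~\ref{introduction}.
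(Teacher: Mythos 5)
Your proposal is essentially the paper's intended argument: the paper omits the proof and states only that it is ``parallel to the proof of theorem \ref{NT2}'', and your chain $(1)\Leftrightarrow(2)\Leftrightarrow(3)\Leftrightarrow(4)$ --- with the geometric comparison $\mu(B_r)\le\nu(B_r\cap\mathbb{R}^n_0)\le\mu(B_{r(1+Cr^2)})$ replacing Remark~\ref{pr}, Theorem~\ref{criterium} (plus Beurling at $\kappa=n-1$) replacing Theorem~\ref{NT1}, and Theorem~\ref{sim2} again closing the loop --- is precisely that parallel. Your scope observation is also sharp: the additive $o(1)$ error in Theorem~\ref{sim2} does obstruct $(3)\Leftrightarrow(4)$ for $\kappa\in(-1,0)$, and the introduction explicitly restricts the $n\ge 3$ generalization to $\alpha\in[0,n-1]$, so the range $(-1,n-1]$ printed in the statement of Theorem~\ref{NL2} is inconsistent with the introduction and should read $[0,n-1]$; your honest restriction is the correct one.
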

 We omit the proof of this theorem here because it is parallel to the proof of theorem \ref{NT2}.
\subsection{Beurling minimal principle} \label{tBMP}
 The term "Beurling's minimal principle" was introduced in the paper \cite{VM}  where the result  of A.Beurling on the behavior of positve harmonic functions was extended to higher dimensions and generalized to positive solutions of elliptic operators in divergence form in sufficiently smooth domains. One of the ideas used in \cite{VM} concerned the asymptotic estimates for the Green function near the boundary and it allowed to go from a halfspace and the Laplace operator to $C^{1,\varepsilon}$ -smooth domains and elliptic operators in the divergence form. We will use this idea to show that the Beurling minimal principle holds for some class of elliptic operators in non-divergence form as well.
First, we are going to formulate the Beurling minimal principle for harmonic functions, it can be viewed as the condition on growth of a positive harmonic function along the sequence of points which ensures the boundary measure to have a point mass. 
 
\begin{definition}
  Suppose a sequence $\{z_i\}$ in $\Omega$ tends to $O\in \partial \Omega$ and is separated (i.e.
$\inf\limits_{i\neq j } \frac{d(x_i,x_j)}{d(x_i,\partial \Omega)}>0$).
	We call the sequence  $\{ z_i \}$ $L$-defining if for any positive $L$-harmonic function $u$ in 
$\Omega$ the inequalities $u(z_i)\geq \kappa P_L(O,z_i)$ imply the inequality $u(z)\geq \kappa P_L(z,O)$ for any $z\in\Omega$, in other words, the boundary measure of $u$ has a point mass at least $\kappa$ at  $O$. 
\end{definition}
  
\begin{theorem}[{\cite{AB},\cite{BD1},\cite{VM}}]
 Suppose  $\Omega$ is a $C^{1,\varepsilon}$-smooth and bounded domain in $\mathbb{R}^n$. Suppose that 
\begin{enumerate}[a)]
\item $L$ is the Laplace operator or
\item $L= div(a_{ij}\frac{\partial}{\partial x_i})$ is  a uniformly elliptic operator  in divergence form and the coefficients $a_{ij}$ are Holder continuous in $\overline \Omega$.
\end{enumerate}
 Then the separated sequence $\{ z_i \}$ tending to $O\in \partial \Omega$ is defining for $L$ if and only if 
 \begin{equation} \label{Beurling condition}
\sum\limits_{i} \left(\frac{d(z_i,\partial \Omega)}{d(z_i,O)} \right)^n= +\infty.
\end{equation}
\end{theorem}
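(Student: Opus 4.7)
My plan is to reduce the theorem, in both cases (Laplace operator, and uniformly elliptic divergence-form operator with Holder continuous coefficients), to the classical Beurling--Dahlberg theorem for the Laplace operator on the halfspace $\mathbb{R}^n_+$. The engine of the reduction is the boundary asymptotic $P_L(x,y) \sim \kappa_n d(y,\partial\Omega)/d(x,y)^n$ established for non-divergence operators in Theorem \ref{P3}; the same argument applies to divergence-form operators with Holder coefficients and is essentially the estimate derived in \cite{VM}.

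First I would straighten $\partial\Omega$ in a neighborhood of $O$ by a $C^{2,\varepsilon}$-diffeomorphism whose Jacobian at $O$ is orthogonal. This preserves the class of operators (up to a mild perturbation of the ellipticity constants), the distances $d(z_i,\partial\Omega)$ and $d(z_i,O)$ up to bounded multiplicative factors (so that the Beurling sum diverges or converges in both pictures simultaneously), and the point mass of any boundary measure at $O$. Hence I may assume $\partial\Omega$ is flat near $O$ and $A(O)$ is the identity.

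Next, mimicking Theorem \ref{sim2}, I project the boundary measure $\mu$ of a positive $L$-harmonic function $u$ onto the tangent hyperplane, obtaining a positive harmonic continuation $\tilde u$ in $\mathbb{R}^n_+$ whose boundary measure $\tilde\mu$ has the same point mass at $O$ as $\mu$. Theorem \ref{P3} gives $P_L(O,z_i) = P_\Delta(O,z_i)(1+o(1))$, and the argument of Theorem \ref{sim2} gives $\tilde u(z_i) = u(z_i)(1+o(1)) + o(1)$. Consequently, the hypothesis $u(z_i) \geq \kappa P_L(O,z_i)$ for all $i$ transfers (after absorbing the $o(1)$ tails) to $\tilde u(z_i) \geq (\kappa-\delta) P_\Delta(O,z_i)$ for every $\delta > 0$ and all large $i$; conversely, a point mass of $\tilde\mu$ at $O$ of size $\geq \kappa - \delta$ for every $\delta > 0$ forces the point mass of $\mu$ at $O$ to be $\geq \kappa$. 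The separation condition on $\{z_i\}$ is preserved by the bi-Lipschitz transform, so the problem reduces to the halfspace with $\Delta$.

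In the halfspace case the theorem is classical. For sufficiency (divergence of $\sum (d(z_i,\partial \mathbb{R}^n_+)/|z_i|)^n$ forces the sequence to be defining), I would decompose $u = a P(\cdot,O) + u_1 + Ct$ with $a = \mu(\{O\})$ and $u_1$ the Poisson integral of $\mu - a\delta_O$, then use a Carleson-type estimate to show that along any separated sequence with divergent Beurling sum, $u_1(z_i)/P(z_i,O) \to 0$ and $Cz_{i,n}/P(z_i,O) \to 0$, forcing $a \geq \kappa$. For necessity (convergent Beurling sum implies a counterexample), one constructs an explicit positive harmonic function with no point mass at $O$ by placing atoms of carefully calibrated masses at the projections of the $z_i$ onto $\partial\mathbb{R}^n_+$. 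I expect the main obstacle to be the halfspace sufficiency direction, whose heart is the Carleson-type estimate controlling $\int_{|\xi|\geq\epsilon} P(z_i,\xi)\, d\mu(\xi)/P(z_i,O)$ uniformly in small $\epsilon$ along sequences satisfying the Beurling condition --- this is the content of Dahlberg's argument. In the reduction step the subtle point is ensuring that the $(1+o(1))$ errors in the asymptotic for $P_L$ do not degrade the sharp constant $\kappa$; since the conclusion is a lower bound on a point mass and $\delta > 0$ is arbitrary, this passes through cleanly.
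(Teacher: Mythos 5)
The paper does not prove this statement: it is quoted from Beurling \cite{AB}, Dahlberg \cite{BD1}, and Mazya \cite{VM} and is then used as a black box in the proof of Theorem \ref{BMP2}. So the comparison here is against the paper's \emph{use} of the statement and the literature it cites, not against an internal proof.

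Your high-level plan (flatten the boundary, transfer to the halfspace via boundary asymptotics of the Poisson kernel, invoke the classical halfspace Beurling--Dahlberg result) is indeed the strategy the paper attributes to Mazya, but there are two concrete problems. First, a smoothness mismatch. The stated theorem is for $C^{1,\varepsilon}$-smooth domains, yet every tool you invoke --- Theorem \ref{P3}, Theorem \ref{sim2}, and a ``$C^{2,\varepsilon}$-diffeomorphism'' to straighten $\partial\Omega$ --- requires $C^{2,\alpha}$ regularity, since Theorem \ref{P3} rests on the Hueber--Sieveking continuity theorem (Theorem \ref{G3}), which is only stated for $C^{2,\alpha}$ domains. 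A $C^{1,\varepsilon}$ boundary admits only a $C^{1,\varepsilon}$ straightening, and under that change of variables the Green function continuity argument and hence the precise asymptotic $P_L(x,y)=\kappa_n d(y)/d(x,y)^n(1+o(1))+o(1)$ are not available from the paper's lemmas; you would have to prove a replacement valid in the rougher class, which is the actual content of \cite{VM}. Second, the sharp constant you worry about is a red herring, and chasing it is precisely what forces you into the unavailable asymptotic. By subtracting off any point mass, $\{z_i\}$ is not $L$-defining iff there exists a positive $L$-harmonic $u$ with boundary measure having no point mass at $O$ and some $\kappa>0$ with $u(z_i)\geq\kappa P_L(O,z_i)$ for all $i$; this characterization is invariant under bounded multiplicative changes. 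Therefore the two-sided comparison $K^{-1}P_\Delta\leq P_L\leq K P_\Delta$ (corollary \ref{P1}) already transfers ``defining/not defining'' between operators without any asymptotic identity --- this is exactly how the paper proves Theorem \ref{BMP2}, and it is why that extension only needs $C^{1,1}$. Finally, the halfspace case for $\Delta$, which you correctly identify as the heart of the matter, is only gestured at; as written, that part of the argument is \cite{AB} and \cite{BD1}, not a proof.
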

 From now on, we assume that $\Omega$ is $C^{1,1}$-smooth. We say that the elliptic operator $L$ is good if 
\begin{enumerate}
\item There is one-to-one map from  positive $L$-harmonic functions onto finite Borel measures on $\partial \Omega$ provided by formula :
$$u(\cdot )= \int\limits_{\partial\Omega} P_L(x,\cdot)d\mu(x), $$  
where $\mu$ is a finite Borel measure  on $\partial \Omega$ and $u$ is positive $L$-harmonic in $\Omega$(as before,  we call $\mu$ the boundary measure of $u$).
\item There exists a positive constant $K$ such that the Poisson kernel of $L$ enjoys the following property:
\begin{equation} \label{PEE} \frac{1}{K}\frac{ d(y,\partial\Omega)}{ |x-y|^n}\leq P_L(x,y) \leq K \frac{ d(y,\partial\Omega)}{ |x-y|^n} \end{equation}
 for any $x\in \partial \Omega$ and $y\in \Omega$.
 In other words, the Poisson kernel for $L$ is comparable with the Poisson kernel for the Laplacian.

 For instance, any operator in $L^+(\lambda,\alpha,\Omega)$ is good if $\Omega$ is sufficiently smooth (see sections \ref{introduction},\ref{G-estimates}).
 In \cite{AILR} the Poisson kernel estimate (similar to (\ref{PEE})) for some class of elliptic operators of the form $div(A(x)\nabla_x)+ B(x)\nabla_x $ with singular drift terms is established. Similar estimates of the Poisson kernel for Schrodinger operators are obtained in \cite{ZZ} and \cite{AILR}.
\end{enumerate}
 The next theorem says that the Beurling minimal principle holds for any good elliptic operator (not necessarily in divergence form) in a $C^{1,1}$- smooth and bounded domain.
\begin{theorem} \label{BMP2}
Assume $\Omega$ is $C^{1,1}$-smooth and bounded domain in $\mathbb{R}^n$ and $L$ is a good elliptic operator in $\Omega$. The separated sequence $\{ z_i \}$ in $\Omega$ tending to $O\in \partial \Omega$ is defining for $L$ if and only if  $\{ z_i \}$ enjoys (\ref{Beurling condition}).
\end{theorem}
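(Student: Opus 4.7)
The plan is to reduce both directions of Theorem \ref{BMP2} to case (a) of the cited Beurling minimal principle (the Laplace case) by means of the Poisson-kernel comparison (\ref{PEE}). Since $C^{1,1}$-smoothness implies $C^{1,\varepsilon}$-smoothness for every $\varepsilon\in(0,1)$, that Laplace case is applicable in $\Omega$. The transfer mechanism I will use is this: for any finite Borel measure $\mu$ on $\partial\Omega$ put
$$u_L(z) := \int_{\partial\Omega} P_L(x,z)\,d\mu(x), \qquad u_\Delta(z) := \int_{\partial\Omega} P_\Delta(x,z)\,d\mu(x).$$
Applying (\ref{PEE}) to $L$ and to the Laplacian produces one constant $K_0 \geq 1$, independent of $\mu$, with $K_0^{-1}\,u_\Delta \leq u_L \leq K_0\,u_\Delta$ on $\Omega$ and $K_0^{-1}P_\Delta(O,\cdot) \leq P_L(O,\cdot) \leq K_0\,P_\Delta(O,\cdot)$. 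Because both $L$ and $\Delta$ are good operators, the boundary measure of $u_L$ and of $u_\Delta$ is the same $\mu$; in particular the point mass at $O$ is intrinsic to $\mu$ and does not depend on which operator is used.

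For the sufficient direction I would assume (\ref{Beurling condition}) and take a positive $L$-harmonic $u$ with boundary measure $\mu$ satisfying $u(z_i) \geq \kappa\,P_L(O,z_i)$ for all $i$. Put $m := \mu(\{O\})$ and suppose for contradiction that $m < \kappa$. Writing $\mu = m\delta_O + \mu'$ with $\mu'(\{O\}) = 0$ and $u' := u - m\,P_L(O,\cdot) = \int P_L\, d\mu'$ gives $u'(z_i) \geq (\kappa-m)\,P_L(O,z_i)$ for every $i$. The Laplace extension $v' := \int P_\Delta\, d\mu'$ then satisfies, by the kernel comparison, $v'(z_i) \geq K_0^{-2}(\kappa-m)\,P_\Delta(O,z_i)$ for every $i$. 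Since (\ref{Beurling condition}) holds, the Laplace case of Beurling forces $\mu'$ to carry a point mass of size at least $K_0^{-2}(\kappa-m) > 0$ at $O$, contradicting $\mu'(\{O\}) = 0$.

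For the necessary direction I would assume the sum in (\ref{Beurling condition}) is finite and apply the Laplace case in reverse: there is a positive $\Delta$-harmonic $v$ with boundary measure $\nu$ and a $\kappa > 0$ such that $v(z_i) \geq \kappa\,P_\Delta(O,z_i)$ for all $i$ while $m_\nu := \nu(\{O\})$ is strictly less than $\kappa$. Subtracting $m_\nu\,\delta_O$ yields a measure $\nu'$ with $\nu'(\{O\}) = 0$ and, for $v' := \int P_\Delta\, d\nu'$, the inequality $v'(z_i) \geq (\kappa - m_\nu)\,P_\Delta(O,z_i)$ at every $z_i$. Setting $u' := \int P_L\, d\nu'$, the kernel comparison gives $u'(z_i) \geq K_0^{-2}(\kappa - m_\nu)\,P_L(O,z_i)$ for every $i$; since $u'$ is a positive $L$-harmonic function whose boundary measure $\nu'$ has no mass at $O$, this witnesses that $\{z_i\}$ is not $L$-defining.

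The only delicate point in the reduction is the loss of a factor $K_0^{-2}$ when passing between $L$ and $\Delta$. That loss is harmless here because in the sufficient direction the contradiction is purely qualitative — one only needs a strictly positive point mass for a measure assumed to carry none — and in the necessary direction it suffices to exhibit a single counterexample with \emph{some} positive constant in place of $\kappa$. Thus the factor $K_0^{-2}$ is absorbed by the qualitative structure of the "defining" property rather than appearing in the final statement of Theorem \ref{BMP2}.
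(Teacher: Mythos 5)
Your proposal is correct and follows essentially the same route as the paper: reduce the question to the Laplace case via the two-sided Poisson-kernel comparison (\ref{PEE}), subtracting off the point mass at $O$ so that only a qualitative ``positive point mass'' statement needs to survive the loss of the constant $K_0^{-2}$. The paper phrases this more compactly by first reformulating ``not $L$-defining'' as the existence of a witness $u$ whose boundary measure has \emph{no} point mass at $O$ (absorbing your subtraction step into a single equivalence), and then transferring that witness between $L$ and $\Delta$ in both directions; but the underlying mechanism is identical to yours.
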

\begin{proof} 
It's sufficient to show that non-defining separated sequences for the Laplacian and $L$ coincide. Suppose that $\{ z_i \}$ is not defining for $L$.
Note that $\{ z_i \}$ is not $L$-defining if and only if there is a positive $L$-harmonic function $u$ in $\Omega$ such that its boundary measure $\mu$ has no point mass at $O$ and $u(z_i)\geq \kappa P_L(z_i,O)$ for a $\kappa>0$ and all $i$.
 Consider the harmonic continuation $\tilde u$ of $\mu$ into $\Omega$. Let us use the estimates of the $L$-Poisson kernel (\ref{PEE}) and the $\Delta$-Poisson kernel (\ref{pe}):
\begin{align*} \tilde u(y)= \int\limits_{\partial \Omega} P_\Delta(x,y)d\mu(x) \geq \frac{1}{K_1} \int\limits_{\partial \Omega} \frac{ d(y,\partial\Omega)}{ |x-y|^n} d\mu(x) \geq \\ \geq   \frac{1}{K_2K_1} \int\limits_{\partial \Omega} P_L(x,y)d\mu(x)=u(y), y\in \Omega. \end{align*}
 Hence $$\tilde u(z_i)\geq \frac{\kappa}{K_1K_2} u(z_i)\geq \frac{\kappa}{K_1K^2_2} \frac{ d(z_i,\partial\Omega)}{ d(z_i,O)^n} \geq \left(\frac{1}{K_1K_2}\right)^2\kappa P_\Delta(O,z_i).$$
 Thus $\{ z_i \}$ is not $\Delta$-defining. The converse implication 
\begin{equation*}  \{ z_i \} \hbox{ is not defining for } L  \Longleftarrow \{ z_i \} \hbox{ is not defining for } \Delta
\end{equation*}
 is obtained literally in the same way.
\end{proof}
 The next theorem is a straightforward consequence of theorem \ref{BMP2} and the asymptotic formula for harmonic measure (see theorem \ref{P3}).
\begin{theorem} \label{BMP3}
Assume $\Omega$ is a $C^{2,\varepsilon}$-smooth and bounded domain in $\mathbb{R}^n$. Suppose  $L \in\L(\lambda,\alpha,\Omega)$ and the matrix of the leading coefficients of $L$ at $O\in \partial\Omega$  is the identity matrix. Suppose that a separated sequence $\{ z_i \}$ tending to $O\in \partial \Omega$ satisfies (\ref{Beurling condition}):
Then for any positive $L$-harmonic function $u$ the asymptotic inequality 
$$\liminf \limits_{i \to \infty}\frac{u(z_i)}{\kappa \frac{d(z_i,\partial \Omega)}{d(z_i,O)^n}} \geq 1 $$
implies that its boundary measure has a point mass at least $\frac{\kappa}{\kappa_n}$ at $O$ and the asymptotic inequality
$$\liminf\limits_{z\in \Omega; z\to O} \frac{u(z)}{ \kappa \frac{d(z,\partial \Omega)}{d(z,O)^n}}\geq 1$$ 
 holds.
\end{theorem}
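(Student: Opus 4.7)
The plan is to feed the hypothesis into the Beurling minimal principle for good operators (theorem \ref{BMP2}) after renormalizing the inequality by means of the sharp asymptotic $P_L(O,z)\sim \kappa_n d(z,\partial\Omega)/d(z,O)^n$ coming from theorem \ref{P3}(i). All the hard work sits in those two inputs; what remains is essentially bookkeeping of constants and an $\varepsilon$-argument.

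First, I would note that $L\in \mathrm{L}^+(\lambda,\alpha,\Omega)$ together with the $C^{2,\varepsilon}$-smoothness of $\Omega$ makes $L$ a good elliptic operator in the sense of the preceding subsection: the two-sided Poisson kernel bound (\ref{pe}) holds, and the representation (\ref{representation 1}) provides a bijection between positive $L$-harmonic functions and their boundary measures. Theorem \ref{BMP2} therefore applies, and the Beurling condition (\ref{Beurling condition}) guarantees that $\{z_i\}$ is $L$-defining; moreover, since removing finitely many terms cannot destroy the divergence of the series in (\ref{Beurling condition}), every tail $\{z_i\}_{i\geq N}$ is $L$-defining as well.

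Next, I would translate the hypothesis. Apply theorem \ref{P3}(i) with $x=O$ fixed and $y=z_i\to O$ to obtain
$$P_L(O,z_i)=\kappa_n\,\frac{d(z_i,\partial\Omega)}{d(z_i,O)^n}(1+o(1)),\qquad i\to\infty.$$
Combined with the assumed $\liminf u(z_i)/\bigl(\kappa\,d(z_i,\partial\Omega)/d(z_i,O)^n\bigr)\geq 1$, this yields, for every $\varepsilon>0$, an index $N=N(\varepsilon)$ such that
$$u(z_i)\geq \frac{\kappa}{\kappa_n}(1-\varepsilon)\,P_L(O,z_i)\qquad \text{for all } i\geq N.$$
Applying the defining property of the tail $\{z_i\}_{i\geq N}$ then produces
$$u(z)\geq \frac{\kappa}{\kappa_n}(1-\varepsilon)\,P_L(O,z)\qquad \text{for every } z\in\Omega.$$
In terms of the representation (\ref{representation 1}), this is exactly the statement that the boundary measure $\mu$ of $u$ dominates the measure $\frac{\kappa}{\kappa_n}(1-\varepsilon)\delta_O$, so $\mu(\{O\})\geq \frac{\kappa}{\kappa_n}(1-\varepsilon)$; letting $\varepsilon\downarrow 0$ gives the point-mass bound $\mu(\{O\})\geq \kappa/\kappa_n$.

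For the second asymptotic inequality, I would invoke theorem \ref{P3}(i) a second time, now with variable $z\to O$. Using the lower bound just established and substituting $P_L(O,z)=\kappa_n\,d(z,\partial\Omega)/d(z,O)^n(1+o(1))$ gives
$$\liminf_{z\in\Omega,\,z\to O}\frac{u(z)}{\kappa\,d(z,\partial\Omega)/d(z,O)^n}\geq 1-\varepsilon,$$
and letting $\varepsilon\to 0$ concludes the argument. Since theorems \ref{BMP2} and \ref{P3} do all the heavy lifting, the only delicate point is the matching of constants: one must check that the single constant $\kappa_n$ appearing in theorem \ref{P3}(i) absorbs the normalization freedom of $P_L$ used in the definition of an $L$-defining sequence, so that the hypothesis and the conclusion are transferred across $\kappa_n$ consistently. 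This is the one spot that warrants care, but it is a matter of tracking the normalization of the Poisson kernel rather than a genuine obstacle.
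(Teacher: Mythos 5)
Your proof is correct and follows exactly the route the paper indicates (it calls Theorem \ref{BMP3} a ``straightforward consequence'' of Theorem \ref{BMP2} and Theorem \ref{P3} without spelling out the details): normalize the hypothesis via the asymptotic $P_L(O,z)\sim\kappa_n\,d(z,\partial\Omega)/d(z,O)^n$ from Theorem \ref{P3}(i), feed it into the $L$-defining property from Theorem \ref{BMP2} applied to a tail of $\{z_i\}$, and translate back with the same asymptotic. The tail observation and the $\varepsilon$-bookkeeping are exactly what is needed, and there is in fact no residual normalization ambiguity in $P_L$ since it is the density of $L$-harmonic measure with respect to $dS$, so your cautionary remark at the end is moot.
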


\section*{acknowledgements}

I am grateful to V.Havin for his guidance and criticism. I also thank I.Netuka and V.Mazya for helpful advice.
This research is supported by the Chebyshev Laboratory (Department of Mathematics
and Mechanics, St. Petersburg State University) under the RF Government grant
11.G34.31.0026, and by JSC "Gazprom Neft".


\begin{thebibliography}{}
\bibitem{AK} Allen, A. C.; Kerr, E.:
The converse of Fatou's theorem. 
J. London Math. Soc. 28, (1953). 80–89. 
\bibitem{AA} Ancona, A.: Principe de Harnack à la frontière et théorème de Fatou pour un opérateur elliptique dans un domaine lipschitzien. Ann. Inst. Fourier (Grenoble) 28(4), 169-213 (1978).
\bibitem{SAADLN1} Agmon, S., Douglis, A., and Nirenberg, L.: Estimates near boundary for solutions of elliptic partial differential equations satisfying general boundary conditions I,Comm. Pure Appl. Math. 12, 623-727 (1959).
\bibitem{SAADLN2} Agmon, S., Douglis, A., and Nirenberg, L.: ‘Estimates near boundary for solutions of elliptic partial differential equations satisfying general boundary conditions II’,Comm. Pure Appl. Math. 17, 35-92 (1964).

 
\bibitem{AB} Beurling, A.: A minimum principle for positive harmonic functions, Acad. Sci. Fenn., Ser. A. I. Math 372, 3-7 (1965).
\bibitem{BS}  Brossard, Jean; Chevalier, Lucien Problème de Fatou ponctuel et dérivabilité des mesures. (French) [Pointwise Fatou problem and differentiability of measures] Acta Math. 164, no. 3-4, 237–263 (1990).
\bibitem{CD}  Carmona, Joan J.; Donaire, Juan J. The converse of Fatou's theorem for Zygmund measures. Pacific J. Math. 191 (1999), no. 2, 207--222.
\bibitem{BD1} Dahlberg, B.: A minimum principle for positive harmonic functions, Proc. London Math. Soc. (3) 33, no. 2, 238-250 (1976).
\bibitem{BD2} Dahlberg, B.: Estimates of harmonic measure, Arch. Rational Mech. Anal. 65, no. 3, 275-288 (1977).
\bibitem{ED} Dubtsov, E. S. The converse of the Fatou theorem for smooth measures. (Russian) Zap. Nauchn. Sem. S.-Peterburg. Otdel. Mat. Inst. Steklov. (POMI) 315 (2004).
\bibitem{ED2} Dubtsov, E. S. Derivatives of regular measures. (Russian) Algebra i Analiz 19 (2007), no. 2, 86--104; translation in St. Petersburg Math. J. 19 (2008), no. 2, 225--238
\bibitem{PF} P. Fatou, S\'eries trigonometriques et series de Taylor, Acta Math. 30 (1906), 335--400.
\bibitem{GFW} Gehring, F.W.: The Fatou Theorem for Functions Harmonic in a Half-Space, Proc. London Math. Soc. 8(29), 148-160 (1958).
\bibitem{GFW2}  Gehring F.W.: The Fatou theorem and its converse, Trans. Amer. Math. Soc. 85 (1957), 106-121. 
\bibitem{MGKOW} Gruter, M., Widman, K.-O.: The Green function for uniformly elliptic equations. Manuscr. Math. 37(3), 303-342 (1982) 

\bibitem{H} Hardy, G.H.: Divergent series, Oxford: At The Clarendon Press (1949).
\bibitem{HHSM} Hueber, H., Sieveking, M.: Uniform bounds for quotients of Green functions on $C^{1,1}$-domains, Ann. Inst. Fourier (Grenoble) 32(1), 105-117 (1982).
\bibitem{HHSM2}  Hueber, H., Sieveking, M.: Continuous bounds for quotients of Green functions, Arch. Rational Mech. Anal. 89, no. 1, 57-82 (1985).
\bibitem{AILR} Ifra, A., Riahi, L.: Estimates of Green functions and harmonic measures for elliptic operators with singular drift terms, Publ.Mat. 49, 159-177 (2005).
\bibitem{KA} Kheifits, A.I.: Pointwise Fatou theorem for generalized harmonic functions - normal boundary values, Potential Anal., Volume 3, Issue 4, 379-389 (1994). 
\bibitem{LL}  Loomis, L.H. : The converse of the Fatou theorem for positive harmonic functions, Trans. Amer. Math. Soc. 53, 239-250 (1943).
\bibitem{VM} Mazya, V. G.: On Beurling’s theorem on the minimum principle for positive harmonic functions, Zap. Nauchn. Sem. Leningrad. Otdel. Mat. Inst. Steklov. (LOMI) 30: 76-90 (1972).
\bibitem{PYWM} Pan, Yifei, Wang, Mei.: An Application of the Hardy-Littlewood Tauberian Theorem to Harmonic Expansion of a Complex Measure on the Sphere. Real Analysis Exchange 35, no. 2, 517--524 (2009).
\bibitem{WRDU} Ramey, W., Ullrich, D.: On the behavior of harmonic functions near a boundary point,  Trans. Amer. Math. Soc. 305, 207-220 (1988). 
\bibitem{WR} Rudin, W.: Tauberian theorems for positive harmonic functions, Nederl. Akad. Wetensch. Indag. Math. 40, no. 3, 376-384 (1978).
\bibitem{JS} Serrin, J.: On the Harnack inequality for linear elliptic equations, Journal d’Analyse Mathématique
Volume 4, Issue 1, 292-308 (1954 -- 1956).
\bibitem{KOW} Widman, K.-O.: Inequalities for the Green function and boundary continuity of the gradient of solutions of elliptic differential equations. Math. Scand. 21 1967, 17-37 (1968). 
\bibitem{ZZ} Zhao, Z.X.: Green function for Schrodinger operator and conditioned Feynman-Kac gauge. J. Math. Anal. Appl. 116, no. 2, 309-334 (1986).

\end{thebibliography}
\end{document}